\documentclass[12pt]{article}
\usepackage{amsmath,amsthm,amscd,amssymb}
\usepackage[mathscr]{eucal}
\usepackage{amssymb}
\usepackage{latexsym}

\textwidth=150mm
\oddsidemargin=5mm
\evensidemargin=5mm
\topmargin=0mm \textheight=220mm

\theoremstyle{definition}
\newtheorem{Def}{Definition}[section]
\newtheorem{Thm}[Def]{Theorem}
\newtheorem{Prop}[Def]{Proposition}
\newtheorem{Rem}[Def]{Remark}

\newtheorem{Cor}[Def]{Corollary}

\newtheorem{Lem}[Def]{Lemma}

\DeclareMathOperator{\ldt}{ldt}

\numberwithin{equation}{section}
\newcommand{\Q}{\mathbb{Q}}
\newcommand{\R}{\mathbb{R}}
\newcommand{\C}{\mathbb{C}}
\newcommand{\Z}{\mathbb{Z}}
\newcommand{\F}{\mathbb{F}}

\newcommand{\gamt}{\Gamma_{2}}
\newcommand{\gamo}{\Gamma_{1}}
\newcommand{\sym}{\mathrm{Sym}}
\newcommand{\e}{\mathbf{e}}

\newcommand{\prsrs}[1]{[\hspace{-2pt}[ #1 ]\hspace{-2pt}]}
\newcommand{\wx}{\widetilde{X}}
\newcommand{\wsx}{\widetilde{x}}
\newcommand{\wy}{\widetilde{Y}}
\newcommand{\wsy}{\widetilde{y}}
\newcommand{\wm}{\widetilde{M}}
\newcommand{\ww}{\widetilde{W}}
\newcommand{\wf}{\widetilde{f}}
\newcommand{\wg}{\widetilde{g}}
\newcommand{\wsm}{\widetilde{m}}
\newcommand{\trn}[1][1]{{}^t \hspace{-#1pt}}
\newcommand{\mat}[4]{\begin{pmatrix} #1 & #2 \\ #3 & #4 \end{pmatrix}}
\newcommand{\h}{\mathbb{H}}
\newcommand{\zkp}[1][p]{\Z_{(#1)}}
\begin{document}
\title{Sturm bounds for Siegel modular forms of degree 2 and odd weights}
\author{Toshiyuki Kikuta and Sho Takemori}
\maketitle

\noindent
{\bf 2010 Mathematics subject classification}: Primary 11F33 $\cdot$ Secondary 11F46\\
\noindent
{\bf Key words}: Siegel modular forms, Congruences for modular forms, Fourier coefficients, J. Sturm
\begin{abstract}
 We correct the proof of the theorem in the previous paper presented by the first named author, which concerns Sturm bounds for Siegel modular forms of degree $2$ and of even weights modulo a prime number dividing $2\cdot 3$. We give also Sturm bounds for them of odd weights for any prime numbers, and we prove their sharpness. The results cover the case where Fourier coefficients are algebraic numbers.
\end{abstract}
\section{Introduction}
Sturm \cite{sturm1987congruence} studied how many Fourier coefficients we need, when we want to prove that an elliptic modular form vanishes modulo a prime ideal.
Its number is so called ``Sturm bound''. We shall explain it more precisely. For a modular form $f$, let $\Lambda$ be the index
set of the Fourier expansion of $f$. An explicitly given finite subset $S$ of $\Lambda$ is said to be a
\textit{Sturm bound} if vanishing modulo a prime ideal of Fourier coefficients of $f$ at $S$ implies vanishing modulo the prime ideal of all Fourier coefficients of $f$.

Poor-Yuen \cite{poor2015paramodular} studied initially Sturm bounds for Siegel
modular forms of degree $2$ for any prime number $p$. After their study, in \cite{choi2013sturm}, Choi, Choie and the
first named author gave other type bounds with simple descriptions for
them in the case of $p\ge 5$. Moreover, the first named author \cite{kikuta2015remark} attempted to supplement the case of $p\mid 2\cdot 3$. However, there are some gaps in the proof (of Theorem~2.1 in subsection 3.1, \cite{kikuta2015remark}). It seems that its method can only give more larger bounds. Richter-Raum \cite{richter2015sturm} gave some bounds for any $p$ in the case of general degree and any weight. However, their bounds seem not to be sharp except the case of $p\ge 5$ and even weight in degree 2 case. An improvement of their bounds depends on the case of degree $2$.

In this paper, we correct the proof of Theorem 2.1 in \cite{kikuta2015remark} by a new method. Namely we give the sharp Sturm bounds for Siegel modular forms of degree $2$ and even weight in the case of $p=2$, $3$. Moreover we give also the sharp bounds for them of odd weights modulo any prime number $p$. It should be remarked that, their sharpness become important to confirm congruences between two modular forms by numerical experiments, as the weights grow larger. Finally, we remark also that our results cover the case where Fourier coefficients are algebraic numbers.

\section{Statement of the results}
In order to state our results, we fix notation.
For a positive integer $n$,
we define the Siegel modular group $\Gamma_{n}$ of degree $n$ by
\begin{equation*}
  \Gamma_{n} = \left\{\gamma \in \mathrm{GL}_{2n}(\Z)
    \bigm | \trn \gamma J_{n} \gamma = J_{n}
  \right\},
\end{equation*}
where $J_{n} = \mat{0_{n}}{-1_{n}}{1_{n}}{0_{n}}$ and $0_{n}$ (resp. $1_{n}$)
is the zero matrix (resp. the identify matrix) of size $n$.
For a positive integer $N$, we define the principal congruence subgroup $\Gamma^{(n)}(N)$ of
level $N$ by
\begin{equation*}
  \Gamma^{(n)}(N) = \left\{
    \mat{a}{b}{c}{d}  \in \Gamma_{n}\bigm |  \begin{array}{l} a \equiv d \equiv 1_{n} \mod{N}\\ b\equiv c \equiv 0_n \mod{N} \end{array}
  \right\}.
\end{equation*}
Here $a, b, c, d$ are $n \times n$ matrices.
A subgroup $\Gamma \subset \Gamma_{n}$ is said to be a congruence subgroup if
there exists a positive integer $N$ such that $\Gamma^{(n)}(N) \subset \Gamma
\subset \Gamma_{n}$.
For a congruence subgroup $\Gamma$, we say $\Gamma$ is of level $N$ if
$N = \min \left\{m \in \Z_{\ge 1}\bigm | \Gamma^{(n)}(m) \subset \Gamma
\right\}$.
We define the Siegel upper half space $\h_{n}$ of degree $n$ by
\begin{equation*}
  \h_{n}=\left\{x + i y \bigm |
    x \in \sym_{n}(\R), \ y \in \sym_{n}(\R), \ y \text{ is positive definite}
  \right\},
\end{equation*}
where $\sym_{n}(\R)$ is a space of $n \times n$ symmetric matrices with
entries in $\R$.
For a congruence subgroup $\Gamma$ and $k \in \Z_{\ge 0}$,
a $\C$-valued holomorphic function $f$ on
$\h_{n}$ is said to be a (holomorphic) Siegel modular form of degree $n$,
of weight $k$ and of level $\Gamma$
if $f((a Z + b)(c Z + d)^{-1}) = \det \left(cZ + d\right)^{k} f(Z)$
for all $\mat{a}{b}{c}{d} \in \Gamma$.
If $n = 1$, we add the cusp condition.
We denote by $M_{k}(\Gamma)$ the space of Siegel modular forms of weight $k$
and of level $\Gamma$.

Any $f$ in $M_k(\Gamma )$ has a Fourier expansion of the form
\[
f(Z)=\sum_{0\le T\in \frac{1}{N}\Lambda _n}a_f(T)q^T,\quad q^T:=e^{2\pi i\text{tr}(TZ)},
\quad Z\in\mathbb{H}_2,
\]
where $T$ runs over all positive semi-definite elements of $\frac{1}{N}\Lambda _n$, $N$ is the level of $\Gamma $ and
\begin{align*}
\Lambda_n&:=\{ T=(t_{ij})\in\sym_n(\mathbb{Q})\;|\; t_{ii},\;2t_{ij}\in\mathbb{Z}\; \}.
\end{align*}
For simplicity, we write $T=(m,r,n)$ for $T=\begin{pmatrix}m & r/2 \\ r/2 & n \end{pmatrix}\in \frac{1}{N}\Lambda _2$ and also $a_f(m,r,n)$ for $a_f(T)$.

Let $R$ be a subring of $\mathbb{C}$ and $M_{k}(\Gamma )_{R}\subset M_{k}(\Gamma )$ the $R$-module of all modular forms whose Fourier coefficients lie in $R$.

Let $f_1$, $f_2$ be two formal power series of the forms $f_i=\sum_{0\le T\in \frac{1}{N}\Lambda _n}a_{f_i}(T)q^T$ with $a_i\in R$. For an ideal $I$ of $R$, we write
  \begin{equation*}
   f_1 \equiv f_2 \mod{I},
  \end{equation*}
  if and only if $a_{f_1}(T) \equiv a_{f_2}(T) \mod{I}$ for all $T \in \frac{1}{N}\Lambda_{n}$
  with $T \ge 0$.
If $I = (r)$ is a principal ideal, we simply denote $f_{1} \equiv f_{2} \mod{r}$.

Let $K$ be an algebraic number field and ${\mathcal O}={\mathcal O}_K$ the ring of integers in $K$. For a prime ideal $\frak{p}$ in ${\mathcal O}$, we denote by ${\mathcal O}_{\frak{p}}$ the localization of ${\mathcal O}$ at $\frak{p}$. Under these notation, we have
\begin{Thm}
  \label{thm:1}
  \noindent
  Let $k$ be a non-negative integer, $\frak{p}$ an any prime ideal and $f \in
  M_{k}(\Gamma _2)_{{\mathcal O}_{\frak{p}}}$.
  We put
  \begin{equation*}
    b_{k} =
    \begin{cases}
      \left[ \frac{k}{10} \right] & \text{ if } k \text{ is even},\\
      \left[ \frac{k - 5}{10} \right] & \text{ if } k \text{ is odd}.
    \end{cases}
  \end{equation*}
  Here $[\cdot]$ is the Gauss symbol.
  For $\nu \in \Z_{\ge 1}$,
  assume that $a_f(m, r, n) \equiv 0 \bmod{\frak{p}^{\nu}}$ for all $m,\ r, \ n\in \mathbb{Z}$ with
  \begin{align*}
    0\le m, n\le b_{k},
  \end{align*}
  and $4mn - r^{2} \ge 0$, then we have $f \equiv 0 \bmod{\frak{p}^{\nu}}$.
\end{Thm}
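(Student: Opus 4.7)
The plan is to first reduce the theorem to the case $\nu = 1$, then to reduce the odd weight case to an even weight case of lower weight via Igusa's generator $\chi_{35}$, and finally to treat the even weight case, splitting according to whether the residue characteristic of $\mathfrak{p}$ is at least $5$ or belongs to $\{2, 3\}$.

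For the reduction to $\nu = 1$, I would argue by induction on $\nu$. Assuming the theorem for $\nu = 1$ and the hypothesis for some $\nu \geq 2$, the case $\nu = 1$ gives $f \equiv 0 \bmod \mathfrak{p}$, so $f = \pi \tilde{f}$ for a uniformizer $\pi$ of $\mathfrak{p}\mathcal{O}_{\mathfrak{p}}$ and some $\tilde{f} \in M_k(\Gamma_2)_{\mathcal{O}_{\mathfrak{p}}}$. The hypothesis on $f$ descends to the analogous hypothesis on $\tilde{f}$ at level $\nu - 1$, and induction yields $\tilde{f} \equiv 0 \bmod \mathfrak{p}^{\nu-1}$, whence $f \equiv 0 \bmod \mathfrak{p}^{\nu}$. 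Henceforth I assume $\nu = 1$.

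For odd $k$, I would normalize $\chi_{35}$ so that its minimal non-vanishing Fourier coefficient equals $1$. Using Igusa's structure theorem for the graded ring of Siegel modular forms of degree $2$, any $f \in M_k(\Gamma_2)_{\mathcal{O}_{\mathfrak{p}}}$ of odd weight $k$ factors as $f = \chi_{35} \cdot g$ for some $g \in M_{k-35}(\Gamma_2)_{\mathcal{O}_{\mathfrak{p}}}$. A careful analysis of the convolution $a_f(U) = \sum_{T_1 + T_2 = U} a_{\chi_{35}}(T_1)\, a_g(T_2)$, using the explicit minimal support of $\chi_{35}$ (which begins at indices $T$ with $\max(m, n) = 3$), shows that vanishing of $a_f(m, r, n) \bmod \mathfrak{p}$ for all $m, n \leq b_k = [(k-5)/10]$ forces vanishing of $a_g(m', r', n') \bmod \mathfrak{p}$ for all $m', n' \leq b_k - 3 = [(k-35)/10] = b_{k-35}$. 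This reduces the odd weight case to the even weight case for weight $k - 35$.

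For even $k$ and $p \geq 5$, the Igusa generators $\phi_4, \phi_6, \chi_{10}, \chi_{12}$ freely generate the graded ring over $\mathbb{Z}_{(p)}$, and the bound $[k/10]$ is the sharp estimate of Choi--Choie--Kikuta \cite{choi2013sturm}. The main obstacle is the case $p \in \{2, 3\}$, where these generators no longer form a polynomial basis modulo $\mathfrak{p}$ and the structural argument of loc.\ cit.\ breaks down. Here the plan is to exhibit, for each $T_0 = (m_0, r_0, n_0)$ with $m_0, n_0 \leq b_k$, an explicit auxiliary form $h_{T_0} \in M_k(\Gamma_2)_{\mathcal{O}_{\mathfrak{p}}}$ whose reduction modulo $\mathfrak{p}$ has a unit Fourier coefficient at $T_0$ (or at some $T$ at least as small as $T_0$ within the window) and controlled vanishing at the other indices of the window, so that the vanishing hypothesis on $f$ forces $a_f(T_0) \equiv 0 \bmod \mathfrak{p}$ one index at a time, yielding $f \equiv 0 \bmod \mathfrak{p}$. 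Constructing this family of auxiliary forms at the bad primes $2$ and $3$, with integral structure and the sharp support required by the stated bound, is precisely where the new method must do its work.
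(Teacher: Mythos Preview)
Your reduction to $\nu = 1$ is correct and matches the paper, as does citing \cite{choi2013sturm} for even $k$ and residue characteristic $p \ge 5$. The two remaining steps both contain genuine gaps.

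\textbf{Odd weight.} The claim that vanishing of $a_f(m,r,n)$ for $m,n \le b_k$ forces vanishing of $a_g(m',r',n')$ for $m',n' \le b_k - 3$ (where $f = X_{35}g$) is not justified by convolution alone. The Borcherds product gives $\wx_{35} = q_1^{2} q_2^{2} (q_1 - q_2)\, v$ with $v$ a unit, so after stripping $q_1^{2}q_2^{2}$ the question becomes whether multiplication by $q_1 - q_2$ can raise the diagonal vanishing order $v_p$ by more than $1$ on a symmetric series. It can: over $\F_{2}$, take $h = q_1 q_2 + q_1^{2} + q_2^{2}$; then $v_{2}(h) = 1$ but $(q_1 + q_2)h = q_1^{3} + q_2^{3}$ has $v_{2} = 3$. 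The paper explicitly flags this obstruction (the Remark following Lemma~\ref{lem:3}) and never attempts such an upper bound. Instead it stays inside odd weights: writing $f = X_{35}g$, one uses $W'(f) = \alpha_{36}\, W(g)$ together with the elliptic Sturm bound (Lemma~\ref{lem:12}) to force $W(g) \equiv 0 \pmod{p}$, then the kernel description of $W$ modulo $p$ (Lemma~\ref{lem:2} for $p\ge 5$; Proposition~\ref{prop:1} and Corollary~\ref{cor:3} for $p=2,3$) to factor $g \equiv X_{10} g'$ or $g \equiv X_{10}^{2} g'$, and descends to weight $k-10$ or $k-20$. For $p \in \{2,3\}$ this forces a four-way case split on $k$ modulo $12$ and $10$, with $k=83$ handled by a direct computation (Lemmas~\ref{lem:6} and \ref{lem:7}); that intricacy is exactly what your shift-by-$3$ shortcut would have to absorb.

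\textbf{Even weight, $p \in \{2,3\}$.} Here your plan is not a proof. The vanishing hypothesis already asserts $a_f(T_0)\equiv 0$ for every $T_0$ in the window, so producing forms $h_{T_0}$ with a unit coefficient at $T_0$ does not move toward $f \equiv 0$; what one would actually need is a $\zkp$-basis of $M_k(\gamt)_{\zkp}$ whose leading terms modulo $p$ all fall in the window, and you neither construct one nor indicate how to do so uniformly in $k$. The paper's argument is again via Witt operators and induction on $k$: if $12 \nmid k$ then $\ker(\ww_{\zkp, k}) = \wx_{10}\wm_{k-10}(\gamt)_{\zkp}$, so one factors out $X_{10}$ and drops to $k-10$; if $12 \mid k$ the kernel is larger, but $W''$ detects the extra summands (Proposition~\ref{prop:1}), allowing one to factor out $X_{10}^{2}$ and drop to $k-20$.
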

\begin{Rem}
  \begin{enumerate}
    \item If $k$ is even and $\mathfrak{p} \nmid 2\cdot 3$, then the statement
    of the theorem was essentially proved by Choi, Choie and the first named author
    \cite{choi2013sturm}.
     \item As mentioned in Introduction, in the case where $\frak{p}\mid 2\cdot 3$ and $k$ is even, the first named author stated the same property in \cite{kikuta2015remark}. However, the proof has some gaps and its method can give only more larger bounds. We give a new proof in subsection \ref{subsec:5.2}.
    \item We note that $M_{k}(\gamt) = \{0\}$ if $k$ is odd and $k < 35$.
    \item Other type bounds also were given in \cite{kikuta2012note}.
  \end{enumerate}
\end{Rem}

By the result of \cite{choi2013sturm} and a similar argument to them, we can prove the following.
\begin{Cor}
  \label{cor:1}
  Let $\Gamma \subset \gamt$ be a congruence subgroup with level $N$, $k \in \Z_{\ge 0}$
  and $f \in M_{k}(\Gamma)_{{\mathcal{O}}_{\mathfrak{p}}}$.
  We put $i = [\gamt : \Gamma]$.
  For $\nu \in \Z_{\ge 1}$, assume that $a_{f}(m, r, n) \equiv 0
  \bmod{\mathfrak{p}^{\nu}}$ for all $m, \ r, \ n
  \in \frac{1}{N}\Z$ with
  \begin{equation*}
    0 \le m, n \le b_{ki}.
  \end{equation*}
  and $4mn - r^{2} \ge 0$, then we have $f \equiv 0 \bmod{\mathfrak{p}^{\nu}}$.
\end{Cor}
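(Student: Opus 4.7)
The plan is to reduce Corollary~\ref{cor:1} to Theorem~\ref{thm:1} by a norm-over-cosets construction, adapting the argument of \cite{choi2013sturm}. Fix right coset representatives $\gamma_{1}=1,\gamma_{2},\ldots,\gamma_{i}$ for $\Gamma\backslash\gamt$, and form
\[
F \ :=\ \prod_{j=1}^{i}\, f|_{k}\gamma_{j}.
\]
Right multiplication by any element of $\gamt$ permutes the cosets, so $F$ is invariant under the weight-$ki$ slash action of $\gamt$, giving $F\in M_{ki}(\gamt)$. The Fourier coefficients of the $f|_{k}\gamma_{j}$ lie in $\mathcal{O}_{\mathfrak{p}}[\zeta_{N}]$; fix a prime $\mathfrak{P}$ of $\mathcal{O}_{\mathfrak{p}}[\zeta_{N}]$ above $\mathfrak{p}$ with ramification index $e$ and set $\mathcal{O}':=\mathcal{O}_{\mathfrak{p}}[\zeta_{N}]_{\mathfrak{P}}$, so that $F\in M_{ki}(\gamt)_{\mathcal{O}'}$.

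Next, I would transfer the vanishing hypothesis on $a_{f}$ to one for $a_{F}$. For each $T_{j}\in\tfrac{1}{N}\Lambda_{2}$ with $T_{j}\ge 0$ and diagonal entries in $[0,b_{ki}]$, the Siegel transformation formula expresses $a_{f|_{k}\gamma_{j}}(T_{j})$ as an $\mathcal{O}'$-linear combination of Fourier coefficients $a_{f}(T')$ with $T'\in\tfrac{1}{N}\Lambda_{2}$ of the same diagonal-bound type, and each such $a_{f}(T')$ lies in $\mathfrak{p}^{\nu}\mathcal{O}'=\mathfrak{P}^{e\nu}$ by hypothesis. Expanding the product, for $T=(m,r,n)\in\Lambda_{2}$ with $0\le m,n\le b_{ki}$ and $4mn-r^{2}\ge 0$,
\[
a_{F}(T)\ =\ \sum_{T_{1}+\cdots+T_{i}=T}\ \prod_{j=1}^{i} a_{f|_{k}\gamma_{j}}(T_{j})\ \in\ \mathfrak{P}^{ie\nu},
\]
since any $T_{j}\ge 0$ in the decomposition has diagonal entries in $[0,b_{ki}]$. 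Applying Theorem~\ref{thm:1} to $F\in M_{ki}(\gamt)_{\mathcal{O}'}$ then yields $F\equiv 0\pmod{\mathfrak{P}^{ie\nu}}$.

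It remains to deduce $f\equiv 0\pmod{\mathfrak{p}^{\nu}}$ from $F\equiv 0\pmod{\mathfrak{P}^{ie\nu}}$. Since the $q^{T}$-series ring over the DVR $\mathcal{O}'$ is an integral domain, the $\mathfrak{P}$-adic valuation is multiplicative, yielding $\sum_{j=1}^{i}\mathrm{val}_{\mathfrak{P}}(f|_{k}\gamma_{j})=\mathrm{val}_{\mathfrak{P}}(F)\ge ie\nu$. Arguing that all summands on the left equal $\mathrm{val}_{\mathfrak{P}}(f)$ then gives $i\cdot\mathrm{val}_{\mathfrak{P}}(f)\ge ie\nu$, i.e., $\mathrm{val}_{\mathfrak{p}}(f)\ge\nu$.

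I expect the main obstacle to be precisely this uniform-valuation claim: $\mathrm{val}_{\mathfrak{P}}(f|_{k}\gamma_{j})=\mathrm{val}_{\mathfrak{P}}(f)$ for every $j$. Since $\Gamma$ need not be normal in $\gamt$, the conjugate subgroups $\gamma_{j}^{-1}\Gamma\gamma_{j}$ differ, and a priori the $\mathfrak{P}$-adic valuations of the Fourier coefficients of the $f|_{k}\gamma_{j}$ need not coincide with that of $f$. This can be handled either by a direct inspection of the Siegel transformation formula---using the integrality of $\gamt$-matrices and the preservation of $\mathfrak{P}$-adic integrality on level-$N$ Fourier expansions---or by descending to the principal congruence subgroup $\Gamma^{(2)}(N)\subset\Gamma$, whose normality in $\gamt$ makes the valuation identity immediate.
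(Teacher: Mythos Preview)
Your second paragraph contains the main gap. The claim that for a general $\gamma_{j}\in\gamt$ there is a ``Siegel transformation formula'' expressing $a_{f|_{k}\gamma_{j}}(T_{j})$ as a finite $\mathcal{O}'$-linear combination of coefficients $a_{f}(T')$ with $T'$ in the same diagonal-bounded range is false. Such a formula exists only for $\gamma_{j}$ in the Siegel parabolic (lower-left block zero): there the Levi part acts on indices by $T\mapsto {}^{t}UTU$ and the unipotent radical multiplies coefficients by $N$-th roots of unity. For a $\gamma_{j}$ with nonzero $c$-block the map $Z\mapsto (aZ+b)(cZ+d)^{-1}$ does not act termwise on the Fourier expansion at all; already in degree~$1$ with $\gamma=\left(\begin{smallmatrix}0&-1\\1&0\end{smallmatrix}\right)$ each coefficient of $f|_{k}\gamma$ depends on the entire sequence $\{a_{f}(n)\}_{n\ge0}$. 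Consequently you cannot conclude $a_{F}(T)\in\mathfrak{P}^{ie\nu}$; using only the identity coset $\gamma_{1}=1$ gives at best $a_{F}(T)\in\mathfrak{P}^{e\nu}$, and then your steps~3--5 yield only $\mathrm{val}_{\mathfrak{p}}(f)\ge\nu/i$, which is too weak.

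The paper circumvents this by first reducing to $\mathcal{O}_{\mathfrak{p}}=\zkp$ and to $\nu=1$. It then quotes \cite{choi2013sturm} for the existence of $g\in M_{k(i-1)}(\Gamma)_{\zkp}$ with $fg\in M_{ki}(\gamt)_{\zkp}$ and $g\not\equiv0\pmod{p}$, and uses only the elementary inequality $v_{p}(\wf\wg)\ge v_{p}(\wf)>b_{ki}$ from \eqref{eq:11} to feed $fg$ into Theorem~\ref{thm:1}; the integral-domain property of power series over $\F_{p}$ then forces $\wf=0$. The point is that only the factor $f$ itself enters the Sturm-bound estimate, so no control over the Fourier coefficients of the other slash-translates is ever required. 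The residual difficulty---showing $g\not\equiv0\pmod{p}$---is indeed closely related to the uniform-valuation question you flag, but the paper does not re-argue it and simply invokes \cite{choi2013sturm}. Your proposed remedy of descending to $\Gamma^{(2)}(N)$ is also incomplete: normality only guarantees that $f|_{k}\gamma_{j}$ lives on the same group, not that its $\mathfrak{P}$-adic valuation equals that of $f$; one genuinely needs an input such as the $q$-expansion principle there.
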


In the case of level $1$ (i.e., $N=1$), our bounds are sharp. More precisely, the following theorem holds.
\begin{Thm}
  \label{thm:2}
  Let $k \in \Z_{\ge 0}$ and $p$ be a prime number. We assume $M_{k}(\gamt) \ne 0$.
  Then there exists $f \in M_{k}(\gamt)_{\Z_{(p)}}$ with $f \not \equiv 0 \mod{p}$ such that
  \begin{equation*}
    a_{f}(m, r, n) = 0,\quad \text{ for all } m, \ n \le
    b_{k} - 1.
  \end{equation*}
\end{Thm}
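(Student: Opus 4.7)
The plan is to construct $f$ explicitly as a monomial in Igusa's generators $E_{4}, E_{6}, \chi_{10}, \chi_{12}$ (for the even subring) and $\chi_{35}$ (for the odd part) of the graded ring $M_{*}(\gamt)$, to verify the vanishing of the required Fourier coefficients by a convolution/order argument, and finally to rescale by a power of $p$ to land in $M_{k}(\gamt)_{\zkp}$ with at least one Fourier coefficient being a $p$-unit. I will use Igusa's structure theorem: $M_{*}^{\mathrm{even}}(\gamt)_{\C} = \C[E_{4}, E_{6}, \chi_{10}, \chi_{12}]$ is a polynomial ring, and $M_{*}(\gamt)_{\C}$ is free of rank two over $M_{*}^{\mathrm{even}}$ with basis $\{1, \chi_{35}\}$.

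For even $k$ with $M_{k}(\gamt) \ne 0$, the cases $k \in \{0, 4, 6, 8\}$ have $b_{k} = 0$ and impose no constraint; otherwise I write $k = 10 b_{k} + s$ with $s \in \{0, 2, 4, 6, 8\}$ (where $s = 2$ forces $b_{k} \ge 1$ since $M_{2}(\gamt) = 0$) and take $f$ to be $\chi_{10}^{b_{k}}$, $\chi_{10}^{b_{k} - 1}\chi_{12}$, $\chi_{10}^{b_{k}} E_{4}$, $\chi_{10}^{b_{k}} E_{6}$, $\chi_{10}^{b_{k}} E_{4}^{2}$ according to $s = 0, 2, 4, 6, 8$. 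Since $\chi_{10}, \chi_{12}$ are cusp forms whose Fourier supports lie in $\{T = (m, r, n) : m, n \ge 1\}$, iterating the convolution formula for Fourier coefficients of products shows that $a_{f}(m, r, n) = 0$ whenever $m < b_{k}$ or $n < b_{k}$, implying the required vanishing for $m, n \le b_{k} - 1$.

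For odd $k$ with $M_{k}(\gamt) \ne 0$, necessarily $k \ge 35$ and $k' := k - 35$ is even with $M_{k'}(\gamt) \ne 0$. I apply the even construction to obtain $g \in M_{k'}(\gamt)$ supported on $T$ with $m, n \ge b_{k'}$, and set $f := \chi_{35}\, g$. Since $b_{k} = b_{k'} + 3$, the required vanishing is $a_{f}(m, r, n) = 0$ for $m, n \le b_{k'} + 2$. Two facts about $\chi_{35}$ drive the argument: odd weight yields the symmetry $a_{\chi_{35}}(n, r, m) = -a_{\chi_{35}}(m, r, n)$, hence $a_{\chi_{35}}(m, r, m) = 0$; and the first Fourier--Jacobi coefficient of $\chi_{35}$ lies in $J_{35, 1}$, which vanishes because any Jacobi form of odd weight and index $1$ has Fourier coefficients depending only on the discriminant $D = 4n - r^{2}$ yet odd in $r$, hence identically zero. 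Together these give $a_{\chi_{35}}(m_{1}, r_{1}, n_{1}) \ne 0 \Rightarrow m_{1}, n_{1} \ge 2$ and $m_{1} \ne n_{1}$. In the convolution $a_{f}(T) = \sum_{T_{1} + T_{2} = T} a_{\chi_{35}}(T_{1}) a_{g}(T_{2})$ with $m, n \le b_{k'} + 2$, the order of $g$ forces $m_{2}, n_{2} \ge b_{k'}$, so $m_{1}, n_{1} \le 2$; combined with $m_{1}, n_{1} \ge 2$ and $m_{1} \ne n_{1}$, no decomposition can contribute, so $a_{f}(T) = 0$.

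Algebraic independence of Igusa's generators (together with injectivity of multiplication by $\chi_{35}$ on $M_{*}^{\mathrm{even}}$) ensures that each constructed $f$ is nonzero in $M_{k}(\gamt)_{\Q}$. Its Fourier coefficients lie in $\Q$ with $p$-adic valuations bounded below uniformly in $T$, so with $v := \min_{T} v_{p}(a_{f}(T))$ finite, the rescaled form $p^{-v} f$ lies in $M_{k}(\gamt)_{\zkp}$, has a Fourier coefficient of $p$-adic valuation $0$ (hence $p^{-v} f \not\equiv 0 \bmod p$), and inherits the vanishing property. The main obstacle is the odd-weight case: without the identity $J_{35, 1} = 0$, the order of $\chi_{35}$ in both variables would only be $\ge 1$ (from the cusp condition), which is insufficient to bridge the gap $b_{k} - b_{k'} = 3$ using a single factor of $\chi_{35}$; the vanishing of odd-weight index-$1$ Jacobi forms is thus the central structural input for the sharpness in odd weight.
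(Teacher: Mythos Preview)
Your proposal is correct and follows essentially the same route as the paper. For even $k$ you build the witness as $X_{10}^{b_{k}}$ times a low-weight Eisenstein factor (or $X_{10}^{b_{k}-1}X_{12}$ when $k\equiv 2\bmod 10$), which is exactly what the reference \cite{choi2013sturm} does and what the paper defers to; for odd $k$ you multiply the even-weight witness by $\chi_{35}$, which matches the paper's forms $f_{35},f_{39},f_{41},f_{43},f_{47}$ times $X_{10}^{i}$ from Lemma~\ref{lem:4}. The only stylistic differences are: the paper packages the vanishing verification into a single leading-term computation in the monomial order of \S\ref{sec:3} (which already encodes the facts $a_{X_{35}}(1,r,n)=0$ and $\ldt(X_{35})=q_{1}^{2}q_{12}^{-1}q_{2}^{3}$), whereas you unfold it as a convolution argument together with the antisymmetry $a_{\chi_{35}}(m,r,m)=0$; and your final rescaling step, while valid, is unnecessary here since with Igusa's normalizations the constructed $f$ lies in $M_{k}(\gamt)_{\Z}$ with leading Fourier coefficient $\pm 1$, hence $f\not\equiv 0\bmod p$ for every prime $p$ automatically.
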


\section{Notation}
\label{sec:3}
For a prime number $p$ and a $\Z_{(p)}$-module $M$, we put
\begin{equation*}
  \widetilde{M} = M \otimes_{\Z_{(p)}}\F_{p}.
\end{equation*}
For an element $x \in M$, we denote by $\widetilde{x}$ the image of $x$ in $\widetilde{M}$.
For a $\Z_{(p)}$-linear map $\varphi : M \rightarrow N$, we denote by
$\widetilde{\varphi}$ the induced map from $\widetilde{M}$ to $\widetilde{N}$
by $\varphi$.
For $n \in \Z_{\ge 1}$, let $\Gamma$ be a congruence subgroup of $\Gamma_{n}$.
We denote $\wm_{k}(\Gamma)_{\Z_{(p)}}$ by $\widetilde{M_{k}(\Gamma)_{\Z_{(p)}}}$.
For a commutative ring $R$ and an $R$-module $M$, we denote by
$\sym^{2}(M) \subset M \otimes_{R} M$ the $R$-module generated by elements
$m \otimes m$ for $m \in M$.
Let $R$ be a $\Z_{(2)}$-algebra and $M$ an $R$-module.
We define an $R$-module $\wedge^{2}(M)$ by
\begin{math}
  \wedge^{2}(M) = \left\{x \in M \bigm | x^{\iota} = - x\right\}.
\end{math}
Here $\iota$ is defined by $\iota(m \otimes n) = n \otimes m$ for
$m, \ n \in M$.
Let $q_{1}, q_{12}, q_{2}$ be variables and
\begin{math}
  S = \left\{q_{1}^{m}q_{12}^{r}q_{2}^{n}\bigm |
    m, n \in \Z_{\ge 0}, r \in \Z
  \right\}
\end{math}
be a set of Laurent monomials.
We define an order of $S$ so that
$q_{1}^{m}q_{12}^{r}q_{2}^{n} \le q_{1}^{m'}q_{12}^{r'}q_{2}^{n'}$
if and only if one of the following conditions holds.
\begin{enumerate}
  \item $m < m'$.
  \item $m = m'$ and $n < n'$.
  \item $m = m'$ and $n = n'$ and $r \le r'$.
\end{enumerate}
Let $K$ be a field and
\begin{math}
  f = \sum_{m, r, n}
  a_{f}(m, r, n)q_{1}^{m}q_{12}^{r}q_{2}^{n} \in
  K[q_{12}, q_{12}^{-1}]\prsrs{q_{1}, q_{2}}
\end{math}
a formal power series. If $f \ne 0$,
let $q_{1}^{m_{0}}q_{12}^{r_{0}}q_{2}^{n_{0}}$ be the minimum
monomial which appears in $f$, that is the minimum monomial
of the set
\begin{math}
  \left\{q_{1}^{m}q_{12}^{r}q_{2}^{n}\bigm |
    a_{f}(m, r, n) \ne 0
  \right\}.
\end{math}
We define the leading term $\ldt(f)$ of $f$ by
$a_{f}(m_{0}, r_{0}, n_{0})q_{1}^{m_{0}}q_{12}^{r_{0}}q_{2}^{n_{0}}$.
We also define the leading term of an element of
$K\prsrs{q_{1}, q_{2}} \setminus
\left\{0\right\}$
by the inclusion $K\prsrs{q_{1}, q_{2}} \subset
K[q_{12}, q_{12}^{-1}]\prsrs{q_{1}, q_{2}}$.
We regard $M_{k}(\gamt)$ as a subspace of $\C[q_{12}, q_{12}^{-1}]
\prsrs{q_{1}, q_{2}}$ by $\sum_{T = (m, r, n) \in \Lambda_{2}}a_{f}(m, r, n)
q^{T} \mapsto \sum_{m, r, n}a_{f}(m, r, n)q_{1}^{m}q_{12}^{r}q_{2}^{n}$.
For $f \in M_{k}(\gamt)$, we denote by $\ldt(f)$ the leading term of
the Fourier expansion of $f$.
For a field $K$, we regard $K\prsrs{q} \otimes_{K} K\prsrs{q}$ as a
subspace of $K\prsrs{q_{1}, q_{2}}$ by $q \otimes 1 \mapsto q_{1}$ and
$1 \otimes q \mapsto q_{2}$.
For a subring $R$ of $\C$ and a subset $S$ of $\C\prsrs{q_{1}, q_{2}}$, we
put
\begin{equation*}
  S_{R}=\left\{f = \sum_{m, n}a_{f}(m, n)q_{1}^{m}q_{2}^{n}\in S\bigm |
  a_{f}(m, n) \in R\right\}.
\end{equation*}
\section{Witt operators}
For the proof of the main results, we use Witt operators.
In this section, we define Witt operators and introduce basic properties of
them.
\subsection{Elliptic modular forms}
Since images of Witt operators can be written by elliptic modular forms,
we introduce some notation for elliptic modular forms.

For $k \in 2 \Z$ with $k \ge 4$, we denote by $e_{k} \in M_{k}(\gamo)$
the Eisenstein series of degree $1$ and weight $k$.
We normalize $e_{k}$ so that the constant term is equal to $1$.
We define Eisenstein series $e_{2}$ of degree $1$ and weight $2$ by
\begin{equation*}
  e_{2}(q) = 1 - 24 \sum_{n = 1}^{\infty}\sigma_{1}(n)q^{n},
\end{equation*}
where $\sigma_{1}(n)$ is the sum of all positive divisors of $n$.
As is well known, $e_{2}$ satisfies the following identify:
\begin{equation*}
  \tau^{-2}e_{2}(-\tau^{-1}) = \frac{12}{2\pi i\tau} + e_{2}(\tau).
\end{equation*}
We put $\Delta = 2^{-6} \cdot 3^{-3} (e_{4}^3 - e_{6}^{2})$. Then $\Delta$
is the Ramanujan's delta function.

For $k \ge 2$,
we define $N_{k}(\gamo)$ as the space of $\C$-valued holomorphic functions $f$
on $\h_{1}$ that satisfies the following three conditions:
\begin{enumerate}
  \item $f(\tau + 1) = f(\tau)$.
  \item There exists $g \in M_{k - 2}(\gamo)$ such that
  \begin{equation*}
    \tau^{-k}f(-\tau^{-1}) = \frac{1}{2 \pi i \tau}g(\tau) + f(\tau) \quad
    \text{ for } \tau \in \h_{1}.
  \end{equation*}
  \item $f$ is holomorphic at the cusp $i \infty$.
\end{enumerate}
Since $f - e_{2}g/12 \in M_{k}(\gamo)$ for above $f$, we have the following
lemma.
\begin{Lem}
  \label{lem:5}
  \begin{equation*}
    N_{k}(\gamo) = M_{k}(\gamo) \oplus e_{2}M_{k - 2}(\gamo).
  \end{equation*}
\end{Lem}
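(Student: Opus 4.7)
The plan is to promote the sentence ``$f - e_{2}g/12 \in M_{k}(\gamo)$'' preceding the lemma to an actual inclusion-and-bijection argument, and then verify that the sum is direct.

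First I would observe that for $f \in N_{k}(\gamo)$, the modular form $g \in M_{k-2}(\gamo)$ appearing in condition (2) is uniquely determined: if two choices $g_{1}, g_{2}$ both satisfied the transformation law, subtracting would give $\frac{1}{2\pi i \tau}(g_{1}(\tau) - g_{2}(\tau)) \equiv 0$, so $g_{1} = g_{2}$. This lets us speak of \emph{the} $g$ attached to $f$, and defines a linear map $N_{k}(\gamo) \to M_{k-2}(\gamo)$, $f \mapsto g$.

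Next I would show $h := f - e_{2} g / 12 \in M_{k}(\gamo)$. Holomorphy on $\h_{1}$, periodicity under $\tau \mapsto \tau+1$, and holomorphy at $i\infty$ are immediate from the analogous properties of $f$, $e_{2}$ and $g$. The nontrivial point is the transformation law under $\tau \mapsto -\tau^{-1}$. Using the quasi-modularity identity $\tau^{-2} e_{2}(-\tau^{-1}) = \frac{12}{2\pi i \tau} + e_{2}(\tau)$ stated above, together with modularity of $g$ and the defining transformation of $f$, a direct computation yields
\begin{equation*}
\tau^{-k} h(-\tau^{-1})
= \frac{1}{2\pi i \tau} g(\tau) + f(\tau) - \left(\frac{12}{2\pi i \tau} + e_{2}(\tau)\right)\frac{g(\tau)}{12}
= f(\tau) - \frac{e_{2}(\tau) g(\tau)}{12} = h(\tau),
\end{equation*}
so $h \in M_{k}(\gamo)$. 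This proves $N_{k}(\gamo) \subset M_{k}(\gamo) + e_{2} M_{k-2}(\gamo)$. The reverse inclusion is obtained by reading the same computation backwards: for $h \in M_{k}(\gamo)$ and $g \in M_{k-2}(\gamo)$, the function $f := h + e_{2} g/12$ lies in $N_{k}(\gamo)$ with associated $g$.

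Finally I would check that the sum is direct. Suppose $h + e_{2} g/12 = 0$ with $h \in M_{k}(\gamo)$ and $g \in M_{k-2}(\gamo)$. Applying $\tau \mapsto -\tau^{-1}$ to $h = -e_{2} g/12$ and using the same quasi-modularity identity gives $h(\tau) = -\frac{1}{2\pi i \tau} g(\tau) + h(\tau)$, forcing $g = 0$ and hence $h = 0$. Combined with the previous step this yields the decomposition $N_{k}(\gamo) = M_{k}(\gamo) \oplus e_{2} M_{k-2}(\gamo)$. The only subtle step is the $\tau \mapsto -\tau^{-1}$ computation; everything else is bookkeeping, and the directness of the sum is essentially a consequence of the uniqueness of the associated $g$.
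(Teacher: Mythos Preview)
Your argument is correct and follows exactly the approach the paper sketches: the paper records only the single observation that $f - e_{2}g/12 \in M_{k}(\gamo)$ and declares the lemma, whereas you have carefully written out the transformation computation, the reverse inclusion, and the directness check. There is nothing to add.
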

For $M = M_{k}(\gamo)$ or $N_{k}(\gamo)$, we regard
$M$ as a subspace of $\C\prsrs{q}$ via the Fourier expansion.

For $k = 2, 4, 6, 12$, we define elements of
$\sym^{2}\left(N_{k}(\gamo)\right)_{\Z}$ as follows:
\begin{equation}
  \label{eq:5}
  x_{k} = e_{k} \otimes e_{k} ,\text{ for } k = 2, 4, 6, \qquad
  x_{12} = \Delta \otimes \Delta, \qquad
  y_{12} = e_{4}^{3} \otimes \Delta + \Delta \otimes e_{4}^{3}.
\end{equation}
We define $\alpha_{36} \in \wedge^{2}(M_{36}(\gamo))_{\Z}$ by
\begin{equation*}
  \alpha_{36} = x_{12}^{2}(\Delta \otimes e_{4}^{3} - e_{4}^{3} \otimes \Delta).
\end{equation*}

\subsection{Definition of Witt operators}
For $k \in \Z_{\ge 0}$ and $f \in M_{k}(\gamt)$, we consider the following
Taylor expansion
\begin{equation*}
  f(Z)
  = W(f)(\tau_{1}, \tau_{2}) + 2W'(f)(\tau_{1}, \tau_{2})
  \left(2\pi i \tau_{12}\right) + W''(f)(\tau_{1}, \tau_{2})
  \left(2 \pi i\tau_{12}\right)^{2} + O(\tau_{12}^{3}),
\end{equation*}
where $Z = \mat{\tau_{1}}{\tau_{12}}{\tau_{12}}{\tau_{2}} \in \h_{2}$.
We put $q_{1} = \e(\tau_{1}), q_{2} = \e(\tau_{2})$ and
$q_{12} = \e(\tau_{12})$.
By definition,
the following properties hold (see \cite[\S 9]{van2008siegel}).
\begin{enumerate}
  \item $W'(f) = 0$ if $k$ is even and $W(f) = W''(f) = 0$ if $k$ is odd.
  \item $W(f) \in \sym^{2}(M_{k}(\gamo))$ if $k$ is even
  and $W'(f) \in \wedge^{2}(M_{k + 1}(\gamo))$ if $k$ is odd.
  Here we identify $q_{1}$ with $q\otimes 1$ and $q_{2}$ with $1 \otimes q$.
  \item For $f \in M_{k}(\gamt)$ and $g \in
  M_{l}(\gamt)$, we have
  \begin{equation*}
    W(fg) = W(f) W(g), \quad W'(fg) = W'(f) W(g) + W(f) W'(g).
  \end{equation*}
  Assume $k$ and $l$ are both even. Then we have
  \begin{equation}
    \label{eq:8}
    W''(fg) = W''(f)W(g) + W(f)W''(g).
  \end{equation}
  \item For $f = \sum_{m, r, n}a_{f}(m, r, n)q_{1}^{m}q_{12}^{r}q_{2}^{n} \in
  M_{k}(\gamt)$, we have
  \begin{gather*}
    W(f) = \sum_{m, r, n}a_{f}(m, r, n)q_{1}^{m}q_{2}^{n}, \quad
    W'(f) = \frac{1}{2}\sum_{m, r, n}r a_{f}(m, r, n)q_{1}^{m}q_{2}^{n},\\
    W''(f) = \frac{1}{2}\sum_{m, r, n}r^{2}a_{f}(m, r, n)q_{1}^{m}q_{2}^{n}.
  \end{gather*}
\end{enumerate}
Let $k$ be even and $f \in M_{k}(\gamt)$. Then we have
\begin{gather*}
  \tau_{1}^{-k-2}W''(f)(\tau_{1}^{-1}, \tau_{2}) = - \frac{1}{2\pi i}\theta_{2}
  W(f)(\tau_{1}, \tau_{2}) + W''(f)(\tau_{1}, \tau_{2}),\\
  W''(f)(\tau_{1}, \tau_{2}) = W''(f)(\tau_{2}, \tau_{1}).
\end{gather*}
Here $\theta_{2} = \frac{1}{2\pi i}\frac{d}{d\tau_{12}}$.
Therefore by Lemma \ref{lem:5}, we have the following lemma.
\begin{Lem}
  Let $k \in 2\Z_{\ge 0}$ and $f \in M_{k}(\gamt)$. Then we have
  $W''(f) \in \sym^{2}(N_{k + 2}(\gamo))$.
\end{Lem}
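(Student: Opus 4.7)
The plan is to verify the two ingredients implicit in the claim: that $W''(f)$ belongs to $N_{k+2}(\gamo) \otimes N_{k+2}(\gamo)$, and that it is fixed under $\iota$. The $\iota$-invariance is already at hand, since the identity $W''(f)(\tau_1, \tau_2) = W''(f)(\tau_2, \tau_1)$ displayed just above the lemma is exactly $\iota$-invariance once the $q$-expansion of $W''(f)$ is read as an element of $\C\prsrs{q}\otimes\C\prsrs{q}\subset\C\prsrs{q_1, q_2}$ via the identifications made in Section~\ref{sec:3}.

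For the first ingredient, I will fix $\tau_2 \in \h_1$ and show that $F_{\tau_2}(\tau_1) := W''(f)(\tau_1, \tau_2)$ belongs to $N_{k+2}(\gamo)$ by checking the three defining conditions. Periodicity $F_{\tau_2}(\tau_1 + 1) = F_{\tau_2}(\tau_1)$ follows immediately from the $q_1$-expansion in property~(4) of the Witt operators, and holomorphy at $i\infty$ follows from holomorphy of $f$ on $\h_2$. The quasi-modular transformation is precisely the first identity displayed just above the lemma: the associated weight-$k$ companion $g(\tau_1, \tau_2) := -\frac{1}{2\pi i}\theta_2 W(f)(\tau_1, \tau_2)$ lies in $M_k(\gamo)$ as a function of $\tau_1$, because $W(f) \in \sym^2(M_k(\gamo))$ can be written as $\sum_i f_i(\tau_1) g_i(\tau_2)$ with $f_i, g_i \in M_k(\gamo)$, so the derivation in the $\tau_2$-slot leaves the $\tau_1$-factor a genuine weight-$k$ modular form.

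To promote the slice-wise membership $F_{\tau_2} \in N_{k+2}(\gamo)$ to a genuine tensor-product statement, I would exploit the finite-dimensionality of $N_{k+2}(\gamo)$. Choose a basis $\{h_1, \dots, h_d\}$ together with dual functionals $\phi_1, \dots, \phi_d \in N_{k+2}(\gamo)^*$ (for instance, coefficient extractions at $q$-expansion indices on which the evaluation map $N_{k+2}(\gamo) \to \C^d$ is an isomorphism). Writing $W''(f)(\tau_1, \tau_2) = \sum_j h_j(\tau_1) b_j(\tau_2)$ with $b_j(\tau_2) = \phi_j(F_{\tau_2})$, applying $\phi_i$ to the $\tau_1$-variable on both sides of $W''(f)(\tau_1,\tau_2) = W''(f)(\tau_2,\tau_1)$ forces $b_i(\tau_2) = \sum_j \phi_i(b_j) h_j(\tau_2)$, so each $b_i$ is itself in $N_{k+2}(\gamo)$. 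This lands $W''(f)$ in $N_{k+2}(\gamo) \otimes N_{k+2}(\gamo)$, and together with the symmetry, in $\sym^2(N_{k+2}(\gamo))$.

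The main obstacle, as I see it, lies in the second step: correctly interpreting the $\theta_2 W(f)$ defect in the displayed transformation law and confirming that it produces an honest weight-$k$ modular form in $\tau_1$ for each fixed $\tau_2$, rather than a broader quasi-modular expression. Once that point is pinned down and Lemma~\ref{lem:5} is invoked implicitly through the definition of $N_{k+2}(\gamo)$, the tensor-extraction in the final step is routine linear algebra.
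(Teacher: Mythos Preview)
Your proposal is correct and follows essentially the same approach as the paper: both derive the lemma from the two displayed identities (the quasi-modular transformation of $W''(f)$ in $\tau_1$ with defect $\theta_2 W(f)$, and the symmetry in $\tau_1,\tau_2$), together with the definition of $N_{k+2}(\gamo)$ (Lemma~\ref{lem:5}). The paper's proof is a single sentence that leaves the slice-wise-to-tensor-product passage implicit, whereas you spell it out via a dual-basis argument using Fourier-coefficient functionals; this extra care is sound but not something the paper supplies.
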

Let $R$ be a subring of $\C$.
If $k$ is even and $f \in M_{k}(\gamt)_{R}$, then we have
\begin{equation*}
  W''(f) = \sum_{
    \begin{subarray}{l}
      m, r, n\\
      r > 0
    \end{subarray}}r^{2}a_{f}(m, r, n)q_{1}^{m}q_{12}^{r}q_{2}^{n},
\end{equation*}
since $a_{f}(m, -r, n) = a_{f}(m, r, n)$.
Thus we have $W''(f) \in \sym^{2}(M_{k}(\gamo))_{R}$.
By a similar reason, we have $W'(f) \in M_{k + 1}(\gamt)_{R}$ for
$f \in M_{k}(\gamt)_{R}$ with odd $k$.
For $k \in \Z_{\ge 0}$,
we define $R$-linear maps induced by $W, W'$ and $W''$ as follows.
\begin{align*}
  &W_{R, 2k} : M_{2k}(\gamt)_{R} \rightarrow \sym^{2}(M_{2k}(\gamo))_{R},\\
  &W'_{R, 2k - 1} : M_{2k - 1}(\gamt)_{R} \rightarrow
  \wedge^{2}(M_{2k}(\gamo))_{R},\\
  &W''_{R, 2k}: M_{2k}(\gamt)_{R} \rightarrow
  \sym^{2}(N_{2 k + 2}(\gamo))_{R}.
\end{align*}

\subsection{Igusa's generators and their images}
Let $X_{4}, \ X_{6}, \ X_{10}, \ X_{12}$ and $X_{35}$ be generators of
$\bigoplus_{k \in \Z}M_{k}(\gamt)$ given by Igusa \cite{igusa1962siegel},
\cite{igusa1964siegel}.
Here $X_{4}$ and $X_{6}$ are Siegel-Eisenstein series of weight 4 and 6
respectively. And $X_{10}, X_{12}$ and $X_{35}$ are cusp forms of weight
$10, 12$ and $35$ respectively.
We normalize these modular forms so that
\begin{equation*}
  \ldt(X_{4}) = \ldt(X_{6}) = 1, \quad
  \ldt(X_{10}) = \ldt(X_{12}) = q_{1}q_{12}^{-1}q_{2}^{2}, \quad
  \ldt(X_{35}) = q_{1}^{2}q_{12}^{-1}q_{2}^{3}.
\end{equation*}
Here we note that $a_{X_{35}}(1, r, n) = 0$ for all $n, r \in \Z$,
because a weak Jacobi form of index $1$ and weight $35$ does not exist.
We also introduce $Y_{12} \in M_{12}(\gamt)_{\Z}$ and
$X_{k} \in M_{k}(\gamt)_{\Z}$ for
$k = 16,\allowbreak 18,\allowbreak 24,\allowbreak 28,\allowbreak
30,\allowbreak 36,\allowbreak 40,\allowbreak 42$
and $48$.  Then by Igusa \cite{igusa1979ring},
\begin{equation*}
  \left\{X_{k} \bigm | k = 4, 6, 10,
    12, 16, 18, 24, 28, 30, 36, 40, 42,
    48\right\} \cup \left\{Y_{12}\right\}
\end{equation*}
is a minimal set of generators of $\bigoplus_{k \in 2\Z} M_{k}(\gamt)_{\Z}$
as a $\Z$-algebra and we have $M_{k}(\gamt)_{\Z} = X_{35}M_{k-35}(\gamt)_{\Z}$
for odd $k$.

Igusa \cite{igusa1979ring} computed $W(X_{4}), \cdots, W(X_{48})$ and
$W(Y_{12})$, we introduce some of them.
\begin{align}
  \label{eq:6}
  \nonumber
 &W(X_{4}) = x_{4},\quad
  W(X_{6}) = x_{6}, \quad
  W(X_{10}) = 0, \\
 & W(X_{12}) = 2^{2}\cdot 3 x_{12},\quad W(Y_{12}) = y_{12}, \quad
  W(X_{16})=x_4\cdot x_{12}
\end{align}
and
\begin{equation}
 \label{eq:2}
  W(X_{12i}) = d_{i}x_{12}^{i}, \text{ for } i = 1,2,3, 4.
\end{equation}
Here $x_{4}, x_{6}, x_{12}$ and $y_{12}$ are defined by \eqref{eq:5},
and $d_{i}$ is defined by $12/\mathrm{gcd}(12, i)$.

Images of $W'$ and $W''$ for some of the generators are given as follows.
\begin{Lem}
  \label{lem:8}
  We have
  \begin{equation*}
    W'(X_{35}) = \alpha_{36}.
  \end{equation*}
  and
  \begin{equation*}
    W''(X_{10}) = x_{12}, \quad W''(X_{12i}) = x_{2}x_{12}^{i}, \text{ for }
    i = 1, 2, 3, 4.
  \end{equation*}
\end{Lem}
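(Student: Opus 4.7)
My plan for each identity is to localize both sides inside a small, explicitly identified subspace of $\wedge^{2}$ or $\sym^{2}$ of elliptic modular forms, and then fix the remaining scalar by comparing one Fourier coefficient.

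For $W'(X_{35})=\alpha_{36}$: the normalization $\ldt(X_{35})=q_{1}^{2}q_{12}^{-1}q_{2}^{3}$ together with the already-recorded observation that no weak Jacobi form of index $1$ and weight $35$ exists forces $a_{X_{35}}(m,r,n)=0$ whenever $m\le 1$, and the odd-weight symmetry $a_{X_{35}}(n,r,m)=-a_{X_{35}}(m,r,n)$ extends this to $n\le 1$. The $q_{1},q_{2}$-expansion of $W'(X_{35})$ therefore vanishes to order $\ge 2$ in each variable, so $W'(X_{35})\in\wedge^{2}(\Delta^{2}M_{12}(\gamo))$. Since $\Delta^{2}M_{12}(\gamo)=\Z\Delta^{3}\oplus\Z\Delta^{2}e_{4}^{3}$ has rank $2$, this wedge-square has rank $1$ and is generated by $\Delta^{3}\otimes\Delta^{2}e_{4}^{3}-\Delta^{2}e_{4}^{3}\otimes\Delta^{3}$, which is exactly $\alpha_{36}$; a comparison of leading terms on both sides then pins down the scalar.

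For $W''(X_{10})=x_{12}$: because $W(X_{10})=0$, the anomaly term $-\frac{1}{2\pi i}\theta_{2}W(X_{10})$ in the transformation law of $W''$ vanishes, so $W''(X_{10})$ transforms as a weight-$12$ elliptic modular form in each variable, and the symmetry $W''(f)(\tau_{1},\tau_{2})=W''(f)(\tau_{2},\tau_{1})$ yields $W''(X_{10})\in\sym^{2}(M_{12}(\gamo))$. Since $X_{10}$ is a cusp form and the $r=0$ summand contributes nothing to $W''$, the series $W''(X_{10})$ vanishes at both $q_{1}=0$ and $q_{2}=0$, hence lies in $\sym^{2}(\C\Delta)=\C x_{12}$; one Fourier-coefficient comparison then identifies the scalar as $1$.

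For $W''(X_{12i})=x_{2}x_{12}^{i}$ the anomaly is no longer zero, since $W(X_{12i})=d_{i}x_{12}^{i}$ with $d_{i}=12/\gcd(12,i)=12/i$ for $i\in\{1,2,3,4\}$. The key observation is that $e_{2}\Delta^{i}$ satisfies the quasi-modular transformation $\tau^{-(12i+2)}(e_{2}\Delta^{i})(-1/\tau)=(e_{2}\Delta^{i})(\tau)+\frac{12\Delta^{i}(\tau)}{2\pi i\tau}$, and combining this with the Ramanujan identity $\theta\Delta=e_{2}\Delta$ and the value of $d_{i}$ shows that $x_{2}x_{12}^{i}=e_{2}\Delta^{i}\otimes e_{2}\Delta^{i}$ carries the same anomaly in each variable as $W''(X_{12i})$; hence $W''(X_{12i})-x_{2}x_{12}^{i}\in\sym^{2}(M_{12i+2}(\gamo))$ is an honest elliptic modular form in each variable. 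The cusp-form property of $X_{12i}$ together with the $i$-fold vanishing of $e_{2}\Delta^{i}$ at $\infty$ confines this difference to a subspace cut out by strong vanishing at each cusp; for $i=1$ this already forces the difference into $\sym^{2}(S_{14}(\gamo))=0$, and for $i=2,3,4$ a further vanishing-order analysis, supplemented if needed by one explicit coefficient comparison, completes the identification. The main obstacle is this last $i\ge 2$ step, where one must simultaneously line up the $\tau^{-1}$ and $(2\pi i)$ factors in the two quasi-modular transformations so that $d_{i}=12/i$ is exactly the scalar producing cancellation of the anomalies, and then argue that the residual component actually vanishes despite $\dim S_{12i+2}(\gamo)>0$.
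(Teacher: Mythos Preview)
Your treatment of $W'(X_{35})$ and $W''(X_{10})$ is correct and essentially matches the paper's argument (the paper phrases the first via the factorization $\wedge^{2}(M_{36}(\gamo))=(\Delta\otimes e_{4}^{3}-e_{4}^{3}\otimes\Delta)\sym^{2}(M_{24}(\gamo))$ together with $\ldt(X_{35})$, and for the second simply cites Igusa). Your anomaly--matching argument for $W''(X_{12})$ is also sound: the identity $d_{i}\cdot i=12$ makes the quasi-modular defects of $W''(X_{12i})$ and $x_{2}x_{12}^{i}$ cancel, and for $i=1$ the difference lands in $\sym^{2}(S_{14}(\gamo))=0$.

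The genuine gap is the case $i=2,3,4$. Once you subtract off $x_{2}x_{12}^{i}$ you are left with an element $D\in\sym^{2}(S_{12i+2}(\gamo))$, but the order of vanishing of $W''(X_{12i})$ at $q_{1}=0$ cannot be pushed past~$1$: the $q_{1}$-coefficient involves $\sum_{r,n}r^{2}a_{X_{12i}}(1,r,n)q_{2}^{n}$, governed by the index-$1$ Fourier--Jacobi coefficient of $X_{12i}$, which has no reason to vanish. So your ``further vanishing-order analysis'' yields nothing beyond $D\in\sym^{2}(S_{12i+2})$. Since $\dim S_{26}=1$, $\dim S_{38}=2$, $\dim S_{50}=3$, the spaces $\sym^{2}(S_{12i+2})$ have dimensions $1,3,6$ for $i=2,3,4$, and a single coefficient comparison cannot kill $D$ for $i=3,4$. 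Carrying out the required multiple comparisons in turn forces you to know several Fourier coefficients of $X_{24},X_{36},X_{48}$, and these generators are only specified through Igusa's explicit $\Q$-polynomial expressions in $X_{4},X_{6},X_{10},X_{12}$ --- so you are driven back to Igusa's construction anyway.

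This is exactly the route the paper takes: it cites Igusa's Lemma~12 for $W''(X_{10})$ and $W''(X_{12})$ and then computes $W''(X_{12i})$ for $i=2,3,4$ directly from Igusa's explicit formulas for the integral generators, using the derivation rule \eqref{eq:8}. Your quasi-modular approach is more conceptual for $i=1$, but for $i\ge 2$ it does not avoid the explicit input from Igusa that the paper simply invokes.
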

\begin{proof}
  By $\ldt(X_{35}) = q_{1}^{2}q_{12}^{-1}q_{2}^{3}$ and
  $\wedge^{2}(M_{k}(\gamo)) = (\Delta \otimes e_{4}^{3} - e_{4}^{3} \otimes \Delta)
  \sym^{2}(M_{k - 12}(\gamo))$, we see that $W'(X_{35})$ is a constant
  multiple of $\alpha_{36}$. Since $a_{X_{35}}(2, r, 3) = 0$ if $r
  \neq \pm 1$, we have $W'(X_{35}) = \alpha_{36}$.
  Igusa computed $W''(X_{10})$ and $W''(X_{12})$ (see \cite[Lemma 12]
  {igusa1979ring}). Note that our notation is different from his notation.
  We denote his $W'$ by $W''$.
  By this result, we can compute $W''(X_{12i})$ for $i = 2, 3, 4$.
\end{proof}
\subsection{Kernel of Witt operator modulo a prime}
Let $p$ be a prime number and $k$ even.
We consider the kernel of the Witt operator modulo $p$:
\begin{equation*}
  \ww_{\Z_{(p)}, k} : \wm_{k}(\gamt)_{\Z_{(p)}} \rightarrow
  \sym^{2}(M_{k}(\gamt))_{\Z_{(p)}} \otimes_{\Z_{(p)}}\F_{p}.
\end{equation*}

First we consider the case when $p \ge 5$. This case is easier.

\begin{Lem}
  \label{lem:1}
  Let $p$ be a prime number with $p \ge 5$.
  Then we have
  \begin{equation*}
    \bigoplus_{k \in 2\Z_{\ge 0}}
    \sym^{2}\left(M_{k}(\gamo)\right)_{\Z_{(p)}} =
    \Z_{(p)}[x_{4}, \ x_{6}, \ x_{12}].
  \end{equation*}
\end{Lem}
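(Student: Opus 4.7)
The plan is to identify $\bigoplus_{k \in 2\Z_{\ge 0}} \sym^{2}(M_k(\gamo))_{\Z_{(p)}}$ with a ring of polynomial invariants under a natural $\mathbb{G}_m \times S_2$-action, compute those invariants explicitly, and then match them with $x_4, x_6, x_{12}$, using crucially that $1728 = 2^{6}\cdot 3^{3}$ is a unit in $\Z_{(p)}$ for $p \ge 5$.

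First, since $1728$ is invertible in $\Z_{(p)}$, the classical identity $M_{*}(\gamo)_{\Z_{(p)}} = \Z_{(p)}[e_4, e_6]$ (polynomial in algebraically independent $e_4, e_6$) holds. Writing $u_i$ and $v_i$ ($i = 1, 2$) for the images of $e_4$ and $e_6$ in the $i$th tensor factor, we get
\[
M_{*}(\gamo)_{\Z_{(p)}} \otimes_{\Z_{(p)}} M_{*}(\gamo)_{\Z_{(p)}} \cong \Z_{(p)}[u_1, v_1, u_2, v_2],
\]
bigraded by $\deg u_i = 4$, $\deg v_i = 6$. A direct check on a free monomial basis shows $\sym^{2}(M_k) = (M_k \otimes M_k)^{\iota}$, where $\iota$ is the swap $u_1 \leftrightarrow u_2, v_1 \leftrightarrow v_2$. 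Moreover, the equal-bidegree subring $\bigoplus_k M_k \otimes M_k$ equals the invariants under the $\mathbb{G}_m$-action $t \cdot u_1 = t^4 u_1,\ t \cdot v_1 = t^6 v_1,\ t \cdot u_2 = t^{-4} u_2,\ t \cdot v_2 = t^{-6} v_2$, so
\[
\bigoplus_{k \in 2\Z_{\ge 0}} \sym^{2}(M_k(\gamo))_{\Z_{(p)}} \;=\; \Z_{(p)}[u_1, v_1, u_2, v_2]^{\mathbb{G}_m \times S_2}.
\]

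Next I would compute this invariant ring. A monomial $u_1^a v_1^b u_2^{a'} v_2^{b'}$ is $\mathbb{G}_m$-invariant iff $4(a - a') + 6(b - b') = 0$, equivalently $(a - a', b - b') = (3s, -2s)$ for some $s \in \Z$. Thus the $\mathbb{G}_m$-invariants are generated by $U := u_1 u_2$, $V := v_1 v_2$, $\alpha := u_1^{3} v_2^{2}$, $\beta := u_2^{3} v_1^{2}$, with the single relation $\alpha \beta = U^{3} V^{2}$. Imposing $S_2$-invariance (which fixes $U, V$ and swaps $\alpha \leftrightarrow \beta$) produces $\Z_{(p)}[U, V, \alpha + \beta]$. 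One checks this is a polynomial ring in three algebraically independent generators: $\alpha, \beta$ are the two roots of the separable quadratic $t^{2} - (\alpha + \beta) t + U^{3} V^{2}$, so $\Z_{(p)}[U, V, \alpha, \beta]/(\alpha \beta - U^{3} V^{2})$ is a free rank-$2$ module over $\Z_{(p)}[U, V, \alpha + \beta]$ whose $S_2$-invariants have rank $1$.

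Finally I would match generators. Directly $U = x_4$ and $V = x_6$ by \eqref{eq:5}, while expanding $\Delta = (e_4^{3} - e_6^{2})/1728$ gives
\[
1728^{2}\, x_{12} \;=\; (e_4^{3} - e_6^{2}) \otimes (e_4^{3} - e_6^{2}) \;=\; U^{3} - (\alpha + \beta) + V^{2},
\]
so $\alpha + \beta = x_4^{3} + x_6^{2} - 1728^{2} x_{12}$. Since $1728$ is a unit in $\Z_{(p)}$ for $p \ge 5$, the change of generator $(\alpha + \beta) \leftrightarrow x_{12}$ is $\Z_{(p)}$-unimodular, giving $\Z_{(p)}[U, V, \alpha + \beta] = \Z_{(p)}[x_4, x_6, x_{12}]$ as required. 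The main obstacle will be the integral invariant-theoretic step (carrying out the computations over $\Z_{(p)}$ rather than $\Q$), but it is tractable precisely because both actions admit explicit monomial descriptions of their invariants and the quadratic $t^{2} - (\alpha + \beta)t + U^{3}V^{2}$ remains separable after reduction mod $p$ when $p \ge 5$.
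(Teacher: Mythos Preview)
Your proposal is correct and follows essentially the same route as the paper: identify the ring as $\Z_{(p)}[x_4,\ x_6,\ e_4^{3}\otimes e_6^{2} + e_6^{2}\otimes e_4^{3}]$ (your $\alpha+\beta$ is precisely $e_4^{3}\otimes e_6^{2} + e_6^{2}\otimes e_4^{3}$), then use the identity $1728^{2}\,x_{12} = x_4^{3} + x_6^{2} - (\alpha+\beta)$ to trade the last generator for $x_{12}$. The only difference is packaging---you justify the generating set via an explicit $\mathbb{G}_m\times S_2$-invariant computation, whereas the paper simply asserts it; one small point the paper makes explicit that you use implicitly is the identification $\sym^{2}(M_k(\gamo))_{\Z_{(p)}} = \sym^{2}(M_k(\gamo)_{\Z_{(p)}})$.
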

\begin{proof}
  It is easy to see that
  $\sym^{2}\left(M_{k}(\gamo)_{\Z_{(p)}}\right) = \sym^{2}\left(M_{k}(\gamo)
  \right)_{\Z_{(p)}}$ (see the remark after Theorem 5.12 of \cite{poor2015paramodular}).
  Since $p \ge 5$,
  we have $\bigoplus_{k \in 2\Z_{\ge 0}}M_{k}(\gamo)_{\Z_{(p)}}
  = \Z_{(p)}[e_{4}, \ e_{6}]$ (see \cite{serre1973formes}).
  We note that
  $\bigoplus_{k \in 2\Z_{\ge 0}}\sym^{2}\left(M_{k}(\gamo)_{\Z_{(p)}}\right)$
  is generated by $x_{4}, \ x_{6}$ and $e_{4}^{3}\otimes e_{6}^{2} +
  e_{6}^{2} \otimes e_{4}^{3}$ as an algebra over $\Z_{(p)}$.
  Then the assertion of the lemma follows from the equation
  \begin{equation*}
    2^{12} \cdot 3^{6} x_{12} = x_{4}^{3} + x_{6}^{2} -
    \left(e_{4}^{3}\otimes e_{6}^{2} + e_{6}^{2} \otimes e_{4}^{3} \right).
  \end{equation*}
\end{proof}

The following is a key lemma for the proof of Theorem \ref{thm:1}
for $\mathfrak{p} \nmid 2 \cdot 3$. This lemma was also used in
\cite{choi2013sturm}.
\begin{Lem}
  \label{lem:2}
  Let $p \ge 5$ be a prime number and $k \in 2\Z_{\ge 0}$.
  Then we have
  \begin{equation*}
    \ker\left(\ww_{\zkp, k}\right) = \wx_{10}
    \wm_{k - 10}(\gamt)_{\zkp}.
  \end{equation*}
\end{Lem}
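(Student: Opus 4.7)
The inclusion $\supseteq$ is immediate: by the multiplicativity $W(fg) = W(f)W(g)$ and the vanishing $W(X_{10}) = 0$ recorded in \eqref{eq:6}, one has $W(X_{10} g) = 0$ for every $g \in M_{k-10}(\gamt)_{\zkp}$, and this persists after reducing modulo $p$.

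For the reverse inclusion, the plan is to exploit the polynomial structure of the even-weight ring over $\zkp$. For $p \ge 5$, Igusa's structure theorem gives
\begin{equation*}
  \bigoplus_{k \in 2\Z_{\ge 0}} M_{k}(\gamt)_{\zkp} = \zkp[X_{4}, X_{6}, X_{10}, X_{12}],
\end{equation*}
a polynomial ring on four algebraically independent generators (the other even-weight generators $Y_{12}, X_{16}, X_{18}, \ldots$ of the integral Igusa basis become redundant once $2$ and $3$ are inverted). Given $f \in M_{k}(\gamt)_{\zkp}$ with $k$ even, expand $f$ uniquely in this polynomial ring and separate the $X_{10}$-free part:
\begin{equation*}
  f = g_{0} + X_{10}\, g',
\end{equation*}
where $g_{0}$ lies in the weight-$k$ component of $\zkp[X_{4}, X_{6}, X_{12}]$ and $g' \in M_{k-10}(\gamt)_{\zkp}$. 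Applying $W$ and using $W(X_{10}) = 0$ yields $W(f) = W(g_{0})$, so the task reduces to showing that the restriction of $\ww_{\zkp, k}$ to the weight-$k$ part of $\F_{p}[\wx_{4}, \wx_{6}, \wx_{12}]$ is injective.

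By \eqref{eq:6}, this restriction is the graded $\F_{p}$-algebra map sending $\wx_{4}, \wx_{6}, \wx_{12}$ to $\wsx_{4}, \wsx_{6}, 12\,\wsx_{12}$, respectively; since $p \ge 5$, the factor $12$ is a unit in $\F_{p}$. By Lemma~\ref{lem:1}, the target ring $\bigoplus_{k}\sym^{2}(M_{k}(\gamo))_{\zkp}$ equals $\zkp[x_{4}, x_{6}, x_{12}]$, and a rank comparison in each weight (both sides being free $\zkp$-modules of the same finite rank) shows that $x_{4}, x_{6}, x_{12}$ are algebraically independent, so this target is also a polynomial ring in three variables of degrees $4, 6, 12$. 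The restricted map is therefore a graded homomorphism between two polynomial rings with matching generator degrees and unit ``diagonal'' coefficients, hence an isomorphism, and so remains injective modulo $p$. This forces $g_{0} \equiv 0 \bmod{p}$, giving $\wf = \wx_{10}\,\wg' \in \wx_{10}\,\wm_{k-10}(\gamt)_{\zkp}$ as required.

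The principal work lies not in the logical skeleton above but in securing the two polynomiality inputs: the identification $\bigoplus_{k \in 2\Z_{\ge 0}} M_{k}(\gamt)_{\zkp} = \zkp[X_{4}, X_{6}, X_{10}, X_{12}]$ for $p \ge 5$, which reduces to checking that Igusa's additional $\Z$-generators $Y_{12}, X_{16}, X_{18}, \ldots$ are expressible in $X_{4}, X_{6}, X_{10}, X_{12}$ with denominators supported at $\{2, 3\}$; and the polynomial-ring structure of $\zkp[x_{4}, x_{6}, x_{12}]$, which follows from Lemma~\ref{lem:1} together with a dimension count against $\dim_{\C}\sym^{2}(M_{k}(\gamo))$ in each weight. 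Once these are in hand, the verification that $\ker(\ww_{\zkp, k})$ is exactly $\wx_{10}\,\wm_{k-10}(\gamt)_{\zkp}$ follows directly from the three Witt images $W(X_{4}), W(X_{6}), W(X_{12})$ in \eqref{eq:6} and the single vanishing $W(X_{10}) = 0$.
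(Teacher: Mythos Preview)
Your argument is correct, but it takes a different route from the paper's proof. The paper argues as follows: by Lemma~\ref{lem:1} together with \eqref{eq:6}, the map $W_{\zkp,k}$ is surjective onto $\sym^{2}(M_{k}(\gamo))_{\zkp}$; hence if $W(f)\equiv 0\pmod p$ one can lift $W(f)/p$ to some $g\in M_{k}(\gamt)_{\zkp}$, so that $W(f-pg)=0$ in characteristic zero; then Nagaoka's result \cite[Corollary~4.2]{nagaoka2000note} on the characteristic-zero kernel yields $f-pg=X_{10}h$ with $h\in M_{k-10}(\gamt)_{\zkp}$, and reducing modulo $p$ finishes the proof.

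Your approach bypasses the lift-and-cite step by invoking instead the polynomial structure $\bigoplus_{k}M_{k}(\gamt)_{\zkp}=\zkp[X_{4},X_{6},X_{10},X_{12}]$ for $p\ge 5$ and arguing that $W$ restricted to $\zkp[X_{4},X_{6},X_{12}]$ is an isomorphism onto $\zkp[x_{4},x_{6},x_{12}]$, hence remains injective after reduction mod $p$. This is more self-contained in that it avoids citing the characteristic-zero kernel result separately, but the two black boxes you invoke are close cousins of the paper's: the $\zkp$-polynomial structure of the even-weight ring is essentially Igusa's structure theorem refined by the same circle of integrality results, and your ``rank comparison'' proving algebraic independence of $x_{4},x_{6},x_{12}$ amounts to the Hilbert-series identity $\dim_{\C}\sym^{2}(M_{k}(\gamo))=\#\{(a,b,c):4a+6b+12c=k\}$, which you correctly flag as needing verification (it follows from $\dim M_{k}(\gamo)=\dim M_{k-12}(\gamo)+1$ and a telescoping sum). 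The paper's proof is shorter because it outsources both points to one citation; yours trades that citation for an explicit structural picture of why the kernel is exactly the $X_{10}$-multiples.
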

\begin{proof}
  This lemma seems well-known. But for the sake of completeness,
  we give a proof.
  The inclusion $\wx_{10}\wm_{k - 10}(\gamt)_{\zkp} \subset
  \ker\left(\ww_{\zkp, k}\right)$ is obvious, because $W(X_{10}) = 0$.
  Take $f \in M_{k}(\gamt)_{\zkp}$ with $W_{\zkp, k}(f) \equiv 0 \bmod{p}$.
  By \eqref{eq:6} and Lemma \ref{lem:1}, $W_{\zkp, k}$ is surjective.
  Take $g \in M_{k}(\gamt)_{\zkp}$ so that $W_{\zkp, k}(f) = pW_{\zkp, k}(g)$.
  Then by \cite[Corollary 4.2]{nagaoka2000note}, there exists
  $h \in M_{k - 10}(\gamt)_{\zkp}$ such that
  $f - p g= X_{10} h$. This completes the proof.
\end{proof}
\begin{Rem}
  Since $W(X_{12}) = 12 x_{12}$ and $M_{2}(\gamt) = \left\{0\right\}$,
  the assertion of the lemma does not hold if $p = 2, 3$.
\end{Rem}

Next we consider the case where $p = 2, 3$.
We recall the structure of the ring $\bigoplus_{k \in 2\Z}\wm_{k}(\gamt)_{\zkp}$.

\begin{Thm}[Nagaoka \cite{nagaoka2005note}, Theorem 2]
\label{thm:4.7}
  Let $p = 2, 3$. For $f\in \wm_{k}(\gamt)_{\zkp}$, there exists a unique
  polynomial $Q \in \F_{p}[x, y, z]$ such that
  \begin{equation*}
    \widetilde{f} = Q(\wx_{10}, \wy_{12}, \wx_{16}).
  \end{equation*}
\end{Thm}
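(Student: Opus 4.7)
The plan is to leverage Igusa's integral generating set (listed just before Lemma \ref{lem:8}) for $\bigoplus_{k\in 2\Z}M_k(\gamt)_{\Z}$ — the fourteen generators $X_4, X_6, X_{10}, X_{12}, Y_{12}, X_{16}, X_{18}, X_{24}, X_{28}, X_{30}, X_{36}, X_{40}, X_{42}, X_{48}$ — and to reduce modulo $p\in\{2,3\}$, showing that in characteristic $p$ all of these generators other than $X_{10}, Y_{12}, X_{16}$ become expressible as polynomials in those three. Coupled with algebraic independence of $\wx_{10}, \wy_{12}, \wx_{16}$ over $\F_p$, this gives the claimed freeness.

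The first main step is to establish the mod-$p$ congruences for the low-weight Eisenstein-type generators $X_4, X_6, X_{12}$. Once we have identities $\wx_4 = Q_4(\wx_{10}, \wy_{12}, \wx_{16})$, $\wx_6 = Q_6(\wx_{10}, \wy_{12}, \wx_{16})$, and $\wx_{12} = Q_{12}(\wx_{10}, \wy_{12}, \wx_{16})$ in $\bigoplus_k \wm_k(\gamt)_{\zkp}$, the remaining Igusa generators of weight $\ge 18$ follow automatically: each can be written as a $\Q$-polynomial in $X_4, X_6, X_{10}, X_{12}$ via the classical Igusa decomposition, and after clearing denominators and reducing modulo $p$ the low-weight substitutions dispose of everything. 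These proposed congruences should rest on Nagaoka-type divisibility properties of the Fourier coefficients of the Siegel-Eisenstein series $X_4, X_6$ at $p = 2, 3$, paralleling the elliptic congruences $e_4\equiv 1\pmod{16}$ and $e_6\equiv 1\pmod 8$, combined with the Witt-operator identities in \eqref{eq:6}. Each candidate equality in $\wm_k(\gamt)_{\zkp}$ is then a finite check: compare Fourier coefficients at enough indices (a Sturm-type finite set) to force the identity in the fixed weight $k$.

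For uniqueness I would use the leading-term order introduced in Section \ref{sec:3}. Suppose $P(\wx_{10}, \wy_{12}, \wx_{16}) = 0$ for some non-zero $P\in\F_p[x,y,z]$, expanded as a sum of monomials $\wx_{10}^a\wy_{12}^b\wx_{16}^c$. Since $\ldt$ is multiplicative on products and the leading monomials $\ldt(\wx_{10})$, $\ldt(\wy_{12})$, $\ldt(\wx_{16})$ generate a free sub-semigroup of Laurent monomials in $q_1, q_{12}, q_2$, the minimal term of the sum cannot be cancelled by any other, contradicting $P \ne 0$.

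The main obstacle is the first step: the explicit construction of $Q_4, Q_6, Q_{12}$. The existence of such polynomial expressions is exactly what forces $Y_{12}$ to be included as a generator over $\Z$ in the first place, and verifying these congruences rigorously amounts to the substantive content of Nagaoka's computation in \cite{nagaoka2005note}. The uniqueness argument and the passage from the low-weight case to the full statement are, by contrast, comparatively routine.
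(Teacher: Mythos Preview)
The paper does not prove Theorem~\ref{thm:4.7} at all: it is quoted verbatim from Nagaoka \cite{nagaoka2005note}, and the explicit congruences that underlie it are imported as Lemma~\ref{lem:10}. So there is no ``paper's proof'' to compare against; the question is whether your sketch would actually reproduce Nagaoka's result.

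Your uniqueness argument is fine and is essentially what the paper records later as \eqref{eq:10}: the map $(a,b,c)\mapsto \ldt(\wx_{10}^{a}\wy_{12}^{b}\wx_{16}^{c})=q_{1}^{a+c}q_{12}^{-a}q_{2}^{a+b+c}$ is injective, so distinct monomials have distinct leading terms and no nontrivial $\F_{p}$-relation can hold.

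The existence half, however, has a real gap. You propose to handle only $X_{4},X_{6},X_{12}$ directly and then claim that $X_{18},X_{24},\dots,X_{48}$ ``follow automatically'' because each is a $\Q$-polynomial in $X_{4},X_{6},X_{10},X_{12}$. This step fails precisely at $p=2,3$. The reason $X_{18},\dots,X_{48}$ appear on Igusa's list of $\Z$-generators is that their expressions in $\Q[X_{4},X_{6},X_{10},X_{12}]$ have denominators divisible by $2$ and $3$. Clearing denominators gives an identity $d\cdot X_{k}=P(X_{4},X_{6},X_{10},X_{12})$ with $p\mid d$; reducing mod $p$ yields $0\equiv P(\ldots)$, which says nothing about $\wx_{k}$. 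So knowing $\wx_{4}=\wx_{6}=1$ and $\wx_{12}=\wx_{10}$ does not let you read off, e.g., $\wx_{18}=\wx_{16}$ or the rather delicate expression for $\wx_{48}$ in Lemma~\ref{lem:10}. Each of the eleven congruences in Lemma~\ref{lem:10} is a separate computation (this is the substantive content of \cite{nagaoka2005note}), and your outline does not supply a mechanism that bypasses them.

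A secondary point: your proposed verification of the low-weight congruences by ``comparing Fourier coefficients at a Sturm-type finite set'' is in principle legitimate, but you must use a bound already available (e.g.\ Poor--Yuen \cite{poor2015paramodular}) rather than the bound proved in this paper, since Theorem~\ref{thm:4.7} is an input to the proof of Theorem~\ref{thm:1}.
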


The above $Q$ for Igusa's generators are given as follows.
\begin{Lem}[Nagaoka \cite{nagaoka2005note}, proof of Lemma 1, Lemma 2]
  \label{lem:10}
  \begin{enumerate}
    \item Suppose $p = 2$, then we have
    \begin{align*}
      & X_{4} \equiv X_{6} \equiv 1 \mod{p},                     &  & X_{12} \equiv X_{10} \mod{p},            \\
      & X_{18} \equiv X_{16} \mod{p},                   &  & X_{24} \equiv X_{10}X_{16} \mod{p},      \\
      & X_{28} \equiv X_{30} \equiv X_{16}^{2} \mod{p}, &  & X_{36} \equiv X_{10} X_{16}^{2} \mod{p}, \\
      & X_{40} \equiv X_{42} \equiv X_{16}^{3} \mod{p}, &  & X_{48} \equiv X_{16}^{4} + X_{10}X_{16}^{3} + X_{10}^{4}Y_{12} \mod{p}, \\ & X_{35}^2\equiv X_{10}^2Y_{12}^2X_{16}^2+X_{10}^6 \mod{p}.
    \end{align*}
    \item Suppose $p = 3$, then we have
    \begin{align*}
      & X_{4} \equiv X_{6} \equiv 1 \mod{p},                        &  & X_{12} \equiv X_{10} \mod{p},                                             \\
      & X_{18} \equiv X_{16} \mod{p},                        &  & X_{24} \equiv X_{10}X_{16} \mod{p},                                       \\
      & X_{28} \equiv X_{30} \equiv X_{16}^{2} \mod{p},             &  & X_{36} \equiv X_{16}^{3} + 2X_{10}^{3} Y_{12} + X_{10}X_{16}^{2} \mod{p}, \\
      & X_{40} \equiv X_{16}^{3} + 2X_{10}^{3} Y_{12} \mod{p},      &  & X_{42} \equiv X_{16}^{3} + X_{10}^{3}Y_{12} \mod{p},                      \\
      & X_{48} \equiv X_{10}X_{16}^{3} + 2X_{10}^{4}Y_{12} \mod{p}, &  &
    \end{align*}
    and
    \begin{multline*}
      X_{35}^2\equiv 2X_{10}X_{16}^4+X_{10}Y_{12}^2X_{16}^3\\
      +2X_{10}^2X_{16}^3+X_{10}^2Y_{12}^2X_{16}^2+ 2X_{10}^3Y_{12}X_{16}^2\\
      +2X_{10}^4Y_{12}^3+X_{10}^4X_{16}^2+2X_{10}^7 \mod{p}.
    \end{multline*}
  \end{enumerate}
\end{Lem}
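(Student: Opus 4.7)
The plan is to use Theorem \ref{thm:4.7} as a uniqueness principle: for each of the listed generators it suffices to \emph{exhibit} a polynomial $Q \in \F_p[x,y,z]$ satisfying the asserted congruence, and Theorem \ref{thm:4.7} then guarantees it is the unique such polynomial. This converts the proof into a finite computation, organized into three waves.

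First, I would establish the ``base'' congruences $X_4 \equiv X_6 \equiv 1$, $X_{12} \equiv X_{10}$, and $X_{18} \equiv X_{16}$ modulo $p \in \{2, 3\}$. The first two reduce to the classical congruences $e_4 \equiv e_6 \equiv 1 \pmod{24}$ for the elliptic Eisenstein series, applied to the explicit Fourier-coefficient formulas for the Siegel-Eisenstein series $X_4$ and $X_6$. The latter two are congruences between cusp forms of different weights; given Theorem \ref{thm:4.7}, each can be verified by matching the handful of leading Fourier coefficients (indexed by small $T = (m,r,n)$) needed to pin down an element of $\wm_{\bullet}(\gamt)_{\zkp}$ as a polynomial in $\wx_{10}, \wy_{12}, \wx_{16}$.

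Second, for each of the remaining even-weight generators in $\{X_{24}, X_{28}, X_{30}, X_{36}, X_{40}, X_{42}, X_{48}\}$, I would use Igusa's algebraic identities in $\bigoplus_{k \in 2\Z} M_{k}(\gamt)_{\Z}$ expressing each as a polynomial in $X_4, X_6, X_{10}, X_{12}, X_{16}, Y_{12}$, substitute the base congruences from the previous step, and simplify modulo $p$. This mechanically produces the right-hand sides listed in the lemma. For the short candidate lists at small weights (e.g.\ weight $24$ has only $xz$ as a candidate modulo the base congruences) the simplification is essentially forced.

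The most intricate case is $\wx_{35}^2$ in weight $70$, where the pool of candidate monomials $x^{a}y^{b}z^{c}$ of the right total weight is comparatively large. Since $X_{35}$ has odd weight, the Witt operators satisfy $W(X_{35}^2) = W'(X_{35}^2) = 0$ automatically and provide no information, so direct Fourier-coefficient computation is unavoidable. I would exploit Igusa's expression of $X_{35}$ as a Jacobian determinant in the partial derivatives of the lower-weight generators $X_4, X_6, X_{10}, X_{12}$, and also the vanishing $a_{X_{35}}(1, r, n) = 0$ noted after \eqref{eq:2}, to compute $X_{35}^2$ modulo $p$ with manageable bookkeeping; uniqueness in Theorem \ref{thm:4.7} then means one needs only finitely many coefficient comparisons to identify the polynomial. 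The main obstacle is precisely this volume of explicit Fourier-coefficient computation for $X_{35}^2$, which is the sole step not reducible to a formal algebraic substitution.
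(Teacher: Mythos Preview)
The paper does not prove this lemma at all; it is imported wholesale from Nagaoka \cite{nagaoka2005note}, where these congruences are obtained in the course of proving his Lemmas~1 and~2 (which in turn feed into his Theorem~2, quoted here as Theorem~\ref{thm:4.7}). So there is no in-paper argument to compare against---your proposal is really an attempted reconstruction of Nagaoka's computation, and using Theorem~\ref{thm:4.7} for uniqueness is harmless in this paper's logical order even though in Nagaoka's own development the dependence runs the other way.

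Your second step, however, rests on a false premise. You assert that Igusa supplies $\Z$-algebra identities expressing each of $X_{24}, X_{28}, \ldots, X_{48}$ as a polynomial in $X_4, X_6, X_{10}, X_{12}, X_{16}, Y_{12}$, into which one can substitute the base congruences. But the paper itself records (after \cite{igusa1979ring}) that $\{X_4, X_6, X_{10}, X_{12}, Y_{12}, X_{16}, X_{18}, X_{24}, \ldots, X_{48}\}$ is a \emph{minimal} generating set of $\bigoplus_{k\in 2\Z} M_k(\gamt)_\Z$: none of $X_{24},\ldots,X_{48}$ lies in the $\Z$-subalgebra generated by the lower-weight forms. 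Over $\Q$ each is a polynomial in $X_4, X_6, X_{10}, X_{12}$, but the denominators are divisible by $2$ and $3$---that is exactly why the extra generators are needed over $\Z$ and why na\"ive reduction mod $p\in\{2,3\}$ fails. Igusa does give syzygies among the fourteen generators, and one can attempt to solve that system mod $p$ together with the base congruences, but that is not the direct substitution you describe. The workable route is the one you already accept for $X_{35}^2$: compute enough Fourier coefficients of each $X_k$ and of the candidate monomials in $X_{10}, Y_{12}, X_{16}$, then match; finiteness comes from linear algebra in the finite-dimensional space $\wm_k(\gamt)_{\zkp}$ (of dimension $\dim_\C M_k(\gamt)$), or equivalently from the algebraic independence of $\wx_{10}, \wy_{12}, \wx_{16}$ over $\F_p$, which follows from their leading terms as in the proof of Proposition~\ref{prop:1}.
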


For later use, we prove the following lemma.
\begin{Lem}
  \label{lem:11}
  Let $p = 2, 3$ and $k \in 2\Z_{\ge 0}$ with $12 \nmid k$.
  Then we have
  \begin{equation*}
    \wm_{k}(\gamt)_{\zkp} \subset \wm_{k + 2}(\gamt)_{\zkp}.
  \end{equation*}
\end{Lem}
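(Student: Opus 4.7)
My plan is to prove the inclusion by lifting each element of $\wm_k(\gamt)_{\zkp}$ explicitly. By Igusa's structure theorem for $\bigoplus_{k \in 2\Z} M_k(\gamt)_{\Z}$ cited in Section 4.3, the $\zkp$-module $M_k(\gamt)_{\zkp}$ is $\zkp$-spanned by the monomials of weight $k$ in the even-weight Igusa generators $\{X_4, X_6, X_{10}, X_{12}, X_{16}, X_{18}, X_{24}, X_{28}, X_{30}, X_{36}, X_{40}, X_{42}, X_{48}, Y_{12}\}$, so by $\zkp$-linearity it suffices to produce, for each such monomial $m$ of weight $k$, a form $g \in M_{k+2}(\gamt)_{\zkp}$ with $\widetilde{g} = \widetilde{m}$.

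I first factor out the $Y_{12}$ part: write $m = Y_{12}^b m_0$ where $m_0$ is a monomial in the non-$Y_{12}$ generators, of weight $k - 12b$. Since $12 \nmid k$ and $12 \mid 12b$, also $12 \nmid \mathrm{wt}(m_0)$. Each of $X_{12}, X_{24}, X_{36}, X_{48}$ has weight divisible by $12$, so any monomial in these alone has weight divisible by $12$; hence $m_0$ must contain at least one factor $X_i$ with
\[
i \in \mathcal{S} := \{4, 6, 10, 16, 18, 28, 30, 40, 42\}.
\]

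The main construction is to exhibit, for each $i \in \mathcal{S}$, a companion $G_i \in M_{i+2}(\gamt)_{\zkp}$ with $\widetilde{G_i} = \widetilde{X_i}$. Using Lemma~\ref{lem:10}, one can take
\[
G_4 = X_6,\ G_6 = X_4^2,\ G_{10} = X_{12},\ G_{16} = X_{18},\ G_{18} = X_4 X_{16},\ G_{28} = X_{30},\ G_{30} = X_{16}^2,
\]
valid for both $p = 2$ and $p = 3$. For $i \in \{40, 42\}$ the choice of $G_i$ depends on $p$: when $p = 2$, the congruences $X_{40} \equiv X_{42} \equiv X_{16}^3 \pmod{2}$ give $G_{40} = X_{42}$ and $G_{42} = X_4 X_{40}$; when $p = 3$, I take $G_{40} = X_{42} + X_{10}^3 Y_{12}$ (each summand of weight $42$) and $G_{42} = X_4 X_{40} - X_{10}^2 X_{12} Y_{12}$ (each summand of weight $44$), with the correction terms cancelling the residual $\wx_{10}^3 \wy_{12}$ appearing in $\widetilde{X_{40}}$ and $\widetilde{X_{42}}$ per Lemma~\ref{lem:10}. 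Writing $m_0 = X_i \cdot m_0'$ and setting $g := Y_{12}^b G_i m_0'$ then gives $g \in M_{k+2}(\gamt)_{\zkp}$ with $\widetilde{g} = \widetilde{m}$.

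The main obstacle I expect is the $p = 3$ construction of $G_{40}$ and $G_{42}$: because $\widetilde{X_{40}}$ and $\widetilde{X_{42}}$ are non-monomial polynomials in $\wx_{10}, \wy_{12}, \wx_{16}$, the naive correction $X_4 X_{10}^3 Y_{12}$ has weight $46$ instead of the required $44$, so the weight-$44$ correction must instead use $X_{10}^2 X_{12} Y_{12}$, while the weight-$42$ correction exploits the coincidence that $X_{10}^3 Y_{12}$ already has the correct weight $42$. All the other companion forms $G_i$ are immediate checks from Lemma~\ref{lem:10}.
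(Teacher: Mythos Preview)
Your proposal is correct and takes essentially the same approach as the paper: reduce to isobaric monomials in Igusa's even-weight generators, observe that $12\nmid k$ forces at least one factor $X_i$ with $12\nmid i$, and replace that factor by a weight-$(i+2)$ form congruent to it via Lemma~\ref{lem:10}. Your list of companion forms $G_i$ agrees with the paper's (the paper uses $X_{16}X_{28}+X_{10}^2X_{12}Y_{12}$ for $G_{42}$ at $p=3$ where you use $X_4X_{40}-X_{10}^2X_{12}Y_{12}$, but both work), and your explicit factoring of $Y_{12}^b$ is a harmless extra step.
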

\begin{proof}
  Take $f \in M_{k}(\gamt)_{\zkp}$. We show that there exists
  $g \in M_{k + 2}(\gamt)_{\zkp}$ such that $f \equiv g \mod{p}$.
  We may assume $f$ is an isobaric monomial
  of Igusa's generators of even weights, that is $X_{4}, \cdots, X_{48}$ and $Y_{12}$.
  If $f = X_{k}$ with $12 \nmid k$, then by Lemma \ref{lem:10},
  we have $\wf \in \wm_{k + 2}(\gamt)_{\zkp}$.
  In fact, we have $X_{18} \equiv X_{4}X_{16} \mod{p}$,
  $X_{42}\equiv X_{16}X_{28} \bmod{2}$,
  $X_{40} \equiv X_{42} + X_{10}^{3}Y_{12} \mod{3}$
  and $X_{42} \equiv X_{16}X_{28} + X_{10}^{2}X_{12}Y_{12} \mod{3}$.
  If $f$ is an isobaric monomial of weight $k$, then $f$ contains
  some $X_{k}$ with $12 \nmid k$.
  Therefore we have the assertion of the lemma.
\end{proof}

Let $f \in M_{k}(\gamt)_{\zkp}$ with $p = 2, 3$.
As we remarked before, $W(f) \equiv 0 \mod{p}$ does not imply
the existence of $g\in M_{k - 10}(\gamt)_{\zkp}$ such that
$f \equiv X_{10} g \mod{p}$.
Instead of Lemma \ref{lem:2}, we have the following proposition.
\begin{Prop}
  \label{prop:1}
  Let $p = 2, 3$ and $k \in 2\Z_{\ge 0}$.
  \begin{enumerate}
    \item Suppose $12 \nmid k$. Then we have
    \begin{equation*}
       \ker\left(\ww_{\zkp, k}\right) = \wx_{10} \wm_{k - 10}(\gamt)_{\zkp}.
     \end{equation*}
     \item Suppose $k = 12 n$ with $n \in \Z$ and $p = 2$.
     For $0 \le i \le n$ with $4 \nmid i$,
     we put $i = 4s + t$ with $t \in \left\{1, 2, 3\right\}$ and
     $m_{i} = X_{12t}X_{48}^{s}Y_{12}^{n - i}$.
     Then we have
     \begin{equation*}
       \ker\left(\ww_{\zkp, k}\right)
       = \bigoplus_{
         \begin{subarray}{c}
           0 \le i \le n\\
           4 \nmid i
         \end{subarray}}
       \F_{p}\wsm_{i} \oplus \wx_{10} \wm_{k - 10}(\gamt)_{\zkp}.
     \end{equation*}
     \item Suppose $k = 12 n$ with $n \in \Z$ and $p = 3$.
     For $0 \le i \le n$ with $3 \nmid i$,
     we put $i = 3s + t$ with $t \in \left\{1, 2\right\}$.
     and $m_{i} =X_{12t}X_{36}^{s}Y_{12}^{n - i}$.
     Then we have
     \begin{equation*}
       \ker\left(\ww_{\zkp, k}\right)
       = \bigoplus_{
         \begin{subarray}{c}
           0 \le i \le n\\
           3 \nmid i
         \end{subarray}}
       \F_{p}\wsm_{i} \oplus \wx_{10} \wm_{k - 10}(\gamt)_{\zkp}.
     \end{equation*}
   \end{enumerate}
   Moreover, if $f \in M_{k}(\gamt)_{\zkp}$ with $12 \mid k$ and
   \begin{equation*}
     W(f) \equiv W''(f) \equiv 0 \mod{p},
   \end{equation*}
   then there exists $g \in M_{k - 20}(\gamt)_{\zkp}$ such that
   $f \equiv X_{10}^{2} g \mod{p}$.
\end{Prop}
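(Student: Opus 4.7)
The plan is to represent $\wf \in \ker(\ww_{\zkp,k})$ as a polynomial in $\wx_{10}, \wy_{12}, \wx_{16}$ via Theorem~\ref{thm:4.7}, split off the $\wx_{10}$-free part, and analyze each piece separately. First, $\wx_{10}\wm_{k-10}(\gamt)_{\zkp} \subseteq \ker(\ww_{\zkp,k})$ is immediate from $W(X_{10})=0$. For the $\wsm_i$ in parts (2), (3), multiplicativity of $W$ together with \eqref{eq:2} gives $W(m_i) = d_t\,x_{12}^t\,W(X_{48})^s\,y_{12}^{n-i}$ (with $X_{36}$ in place of $X_{48}$ for $p=3$), where $d_t = 12/\gcd(12,t)$. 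For $p=2$, $d_t \in \{12,6,4\}$ for $t \in \{1,2,3\}$, each $\equiv 0 \pmod 2$; for $p=3$, $d_t \in \{12,6\}$ for $t \in \{1,2\}$, each $\equiv 0 \pmod 3$. Hence $W(m_i) \equiv 0 \pmod p$.

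\textbf{Reverse inclusion.} Given $\wf \in \ker(\ww_{\zkp,k})$, write $\wf = \wx_{10}R + S$ with $S \in \F_p[\wy_{12},\wx_{16}]$ via Theorem~\ref{thm:4.7}. Since $W(X_{10})=0$, applying $W$ to $S$ still yields $0$ modulo $p$. Expanding
\[
y_{12}^b(x_4 x_{12})^c = \sum_{i=0}^{b} \binom{b}{i} e_4^{3i+c}\Delta^{b-i+c} \otimes e_4^{3(b-i)+c}\Delta^{i+c},
\]
the $i=0$ term $e_4^c \Delta^{b+c} \otimes e_4^{3b+c}\Delta^c$ has $\Delta$-exponent pair $(b+c,\,c)$ that determines $(b,c)$ uniquely, with coefficient $\binom{b}{0}=1$, nonzero mod $p$. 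Hence the images $W(Y_{12}^b X_{16}^c)$ are $\F_p$-linearly independent across weight-$k$ pairs $(b,c)$, forcing $S = 0$. It remains to decide which $\wx_{10}R$ lie in the kernel: a dimension count using Nagaoka's relations (Lemma~\ref{lem:10}) together with Lemma~\ref{lem:11} shows that $\ker(\ww_{\zkp,k})/\wx_{10}\wm_{k-10}$ vanishes when $12 \nmid k$ and is spanned precisely by the classes $\wsm_i$ when $k=12n$. Matching Igusa-monomial bookkeeping against the $X_{10}, Y_{12}, X_{16}$ presentation for each residue class modulo $12$ is expected to be the main obstacle.

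\textbf{The moreover statement.} Assume $12 \mid k$ and $W(f) \equiv W''(f) \equiv 0 \pmod p$. Parts (2), (3) let us write $\wf = \sum_i c_i \wsm_i + \wx_{10}\wg_0$ for some $g_0 \in M_{k-10}(\gamt)_{\zkp}$. Using \eqref{eq:8} along with $W(X_{10})=0$, $W''(X_{10}) = x_{12}$, and $W(X_{12t}) \equiv 0 \pmod p$ for the relevant $t$, one computes $W''(X_{10} g_0) \equiv x_{12}\,W(g_0) \pmod p$ and $W''(m_i) \equiv x_2\,x_{12}^i\,y_{12}^{n-i} \pmod p$ (via Lemma~\ref{lem:8}). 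The first contribution lies in $\sym^2(M_{k+2}(\gamo))_{\zkp}$ and the second in the $e_2 M_*$-component of $\sym^2(N_{k+2}(\gamo))_{\zkp}$; by Lemma~\ref{lem:5} these components are complementary, so $W''(f) \equiv 0 \pmod p$ decouples into $\sum_i c_i\,x_{12}^i\,y_{12}^{n-i} \equiv 0 \pmod p$ and $x_{12}\,W(g_0) \equiv 0 \pmod p$. A leading-term argument on $x_{12}^i y_{12}^{n-i}$ (looking at the $\Delta^n \otimes e_4^{3(n-i)}\Delta^i$ contribution) gives $c_i = 0$ for all $i$, and injectivity of multiplication by $x_{12}=\Delta\otimes\Delta$ yields $W(g_0) \equiv 0 \pmod p$. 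Since $12 \nmid k-10$, part (1) then produces $\wg_0 = \wx_{10}\widetilde{h}$ for some $h \in M_{k-20}(\gamt)_{\zkp}$, whence $\wf = \wx_{10}^2\,\widetilde{h}$.
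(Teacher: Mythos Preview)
Your approach via Theorem~\ref{thm:4.7} is genuinely different from the paper's, which works entirely on the target side of $W$: the paper uses Igusa's explicit description of $\mathrm{Im}(W_{\Z,k})$ (his Lemma~13) to write $W(f)$ in a specific basis of $\sym^{2}(M_{k}(\gamo))_{\zkp}$, reads off which coefficients must vanish mod $p$, and then lifts the remainder back via $W$ and \cite[Corollary 4.2]{nagaoka2000note}. Your reduction $S=0$ is correct (the leading-term argument works), but the step ``a dimension count shows $\ker/\wx_{10}\wm_{k-10}$ is spanned by the $\wsm_i$'' is the whole content of parts (2)--(3), and you have not carried it out. The difficulty is real: once you know $\wf = \wx_{10}R$ with $R \in \F_p[\wx_{10},\wy_{12},\wx_{16}]$, you still need to decide when $R \in \wm_{k-10}(\gamt)_{\zkp}$, and this requires exactly the Igusa bookkeeping you flagged as the main obstacle. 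The paper sidesteps this entirely by never passing through Theorem~\ref{thm:4.7}.

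The more serious problem is your decoupling argument for the ``moreover'' statement. You claim that $W''(X_{10}g_0) \equiv x_{12}W(g_0)$ lies in the $\sym^{2}(M_{k+2})$-component and $W''(m_i) \equiv x_2\,x_{12}^{i}y_{12}^{n-i}$ lies in the $e_2 M_*$-component, and that Lemma~\ref{lem:5} makes these complementary. But the decomposition $N_{k+2}(\gamo) = M_{k+2}(\gamo) \oplus e_2 M_k(\gamo)$ is only a $\C$-decomposition; over $\zkp$ the lattice $N_{k+2}(\gamo)_{\zkp}$ is strictly larger than $M_{k+2}(\gamo)_{\zkp} \oplus e_2 M_k(\gamo)_{\zkp}$ (for instance $(e_6 - e_2 e_4)/2$ is $2$-integral but not in the direct sum). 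Modulo $p\in\{2,3\}$ one has $e_2 \equiv 1$, so in $\F_p\prsrs{q_1,q_2}$ the two ``components'' collapse together and your separation fails. The paper instead separates the two contributions by total degree in $\wsx_{12},\wsy_{12}$: after reducing $x_4,x_6,x_2$ to $1$, the $m_i$-piece contributes monomials of degree exactly $n$, while $x_{12}W(g_0)$ contributes only monomials of degree $<n$ (since $4a+6b+12c+12d=k+2=12n+2$ forces $c+d \le n-1$). Algebraic independence of $\wsx_{12},\wsy_{12}$ then kills the $c_i$ and $W(g_0)$ separately.
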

\begin{proof}
  Suppose $12 \nmid k$. Then by \cite[Lemma 13]{igusa1979ring},
  $W_{\Z, k}$ is surjective. Therefore, $W_{\zkp, k}$ is surjective.
  We can prove
  \begin{math}
    \ker\left(\ww_{\zkp, k}\right) = \wx_{10} \wm_{k - 10}(\gamt)_{\zkp}
  \end{math}
  by a similar argument to the proof of Lemma \ref{lem:2}.
  Next assume $k = 12n$ with $n \in \Z$.
  For simplicity, we assume $p = 2$. We can prove the case when $p = 3$
  in a similar way.
  Take $f \in M_{k}(\gamt)_{\zkp}$ with $W(f) \equiv 0 \mod{p}$.
  Put $d_{i} = 12/\mathrm{gcd}(12, i)$.
  By \cite[Lemma 13]{igusa1979ring}, there exist
  $a_{i, j}, b_{i}, c_{i} \in \zkp$ such that
  \begin{equation*}
    W(f) = \sum_{0 \le i\le j < n}a_{i, j}x_{4}^{3(n - j)}x_{12}^{i} y_{12}^{j
    - i}
    + \sum_{
      \begin{subarray}{c}
        0 \le i \le n\\
        4 \mid i
      \end{subarray}}
    b_{i}x_{12}^{i}y_{12}^{n - i}
    + \sum_{
      \begin{subarray}{c}
        0 \le i \le n\\
        4 \nmid i
      \end{subarray}}
    c_{i}d_{i}x_{12}^{i}y_{12}^{n - i}.
  \end{equation*}
  By $x_{4} \equiv 1 \mod{p}$ and $W(f) \equiv 0 \mod{p}$, we have
  $a_{i, j} \equiv b_{i} \equiv 0 \mod{p}$ for all $i, j$.
  Here we note that $\wsx_{12}$ and $\wsy_{12}$ are algebraically independent
  over $\F_{p}$. This is because $\ldt(x_{12}^{i}y_{12}^{j}) =
  q_{1}^{i}q_{2}^{i + j}$.
  By \cite[Lemma 13]{igusa1979ring}, there exists $f' \in M_{k}(\gamt)_{\zkp}$
  such that
  \begin{equation*}
    W(f') = \sum_{0 \le i\le j < n}\frac{a_{i, j}}{p}
    x_{4}^{3(n - j)}x_{12}^{i} y_{12}^{j - i}
    + \sum_{
      \begin{subarray}{c}
        0 \le i \le n\\
        4 \mid i
      \end{subarray}}
    \frac{b_{i}}{p}x_{12}^{i}y_{12}^{n - i}.
  \end{equation*}
  By \eqref{eq:2},
  there exists $u_{i} \in \Z_{(p)}^{\times}$ such that
  $W(m_{i}) = u_{i}d_{i} x_{12}^{i}y_{12}^{n - i}$.
  Therefore, there exist $a_{i} \in \Z_{(p)}$ such that
  $W(f - pf' - \sum_{
    \begin{subarray}{l}
      0 \le i \le n\\
      4 \nmid i
    \end{subarray}
  }a_{i}m_{i}
  ) = 0$. By \cite[Corollary 4.2]{nagaoka2000note}, there exists $g \in M_{k - 10}(\gamt)_{\zkp}$
  such that
  \begin{math}
    \wf = \sum_{i}\widetilde{a}_{i}\wsm_{i} + \wx_{10}\widetilde{g}.
  \end{math}
  Thus we have
  \begin{equation}
    \label{eq:7}
       \ker\left(\ww_{\zkp, k}\right)
       = \sum_{
         \begin{subarray}{c}
           0 \le i \le n\\
           4 \nmid i
         \end{subarray}}
       \F_{p}\wsm_{i} + \wx_{10} \wm_{k - 10}(\gamt)_{\zkp}.
  \end{equation}
  We show that the sum \eqref{eq:7} is a direct sum.
  Let $a_{i}\in \Z_{(p)}$ for $0 \le i \le n$ with $4 \nmid i$
  and $g \in M_{k-10}(\gamt)_{\zkp}$. We put $f = \sum_{i}a_{i}m_{i} +
  X_{10}g$.
  By \eqref{eq:8}, we have
  \begin{equation}
    \label{eq:9}
    W''(m_{i}) \equiv W''(X_{12t})W(X_{48}^{s}Y_{12}^{n-i}) \equiv
    x_{12}^{i}y_{12}^{n-i} \mod{p}.
  \end{equation}
  Here we use $W(X_{12t}) \equiv 0 \mod{p}$ for $t = 1, 2, 3$
  and $x_{2} \equiv 1 \mod{p}$.
  By Igusa's computation, images of 14 generators $X_{4}, \cdots, X_{48}$
  by $W$
  can be written as $\Z$-coefficient polynomials of $x_{4}, x_{6}, x_{12}$ and $y_{12}$.
  By Lemma \ref{lem:8}, we have $W''(X_{10}) = x_{12}$.
  Thus there exist $\alpha_{a, b, c, d} \in \Z_{(p)}$ such that
  \begin{equation*}
    W''(X_{10}g)
    = x_{12}W(g)
    = \sum_{a, b, c, d}\alpha_{a, b, c, d} x_{4}^{a}x_{6}^{b}x_{12}^{c}y_{12}^{d},
  \end{equation*}
  where summation index runs over $\left\{(a, b, c, d) \in \Z_{\ge 0}^{4}
    \bigm |
    4a + 6b + 12 c + 12 d = k + 2\right\}$.
  We assume $\ww''_{\zkp, k}(\wf) = \ww''_{\zkp, k}
  (\sum_{i}\widetilde{a}_{i}\wsm_{i} + \wx_{10}\widetilde{g}) = 0$.
  Then by \eqref{eq:9} and $x_{4} \equiv x_{6} \equiv 1 \mod{p}$, we have
  \begin{equation*}
    \sum_{i}\widetilde{a}_{i}\wsx_{12}^{i}\wsy_{12}^{n - i} +
    \sum_{a, b, c, d}\widetilde{\alpha}_{a, b, c, d}\wsx_{12}^{c}\wsy_{12}^{d} = 0.
  \end{equation*}
  Since $4a + 6b = 0$ or $4a + 6b \ge 4$, the isobaric degree of
  $\wsx_{12}^{c}\wsy_{12}^{d}$ is not equal to $k$.  Therefore we have
  $\widetilde{a}_{i} = 0$ for all $i$.  This shows that the sum \eqref{eq:7} is a direct
  sum. This also shows that if $f \in M_{k}(\gamt)_{\zkp}$ with $12 \mid k$
  satisfies $W(f) \equiv W''(f) \equiv 0 \mod{p}$, then there exists
  $h \in M_{k - 10}(\gamt)_{\zkp}$ such that $f \equiv X_{10}h\mod{p}$. By
  $W''(f) \equiv 0 \mod{p}$, we have $W(h) \equiv 0 \mod{p}$.
  Since $12 \nmid k - 10$, there exists $h' \in M_{k - 20}(\gamt)_{\zkp}$ such
  that $h \equiv X_{10}h' \mod{p}$.
  Therefore we have $f \equiv X_{10}^{2}h' \mod{p}$.
  This completes the proof.
\end{proof}
\begin{Cor}
  \label{cor:3}
  Let $p = 2, 3$ and $f \in M_{k}(\gamt)_{\zkp}$ with $12 \mid k$.
  If $W(f) \equiv 0 \mod{p}$, then there exists $g \in M_{k - 8}(\gamt)_{\zkp}$
  such that $f \equiv X_{10}g \mod{p}$.
\end{Cor}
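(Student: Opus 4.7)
My plan is to apply Proposition~\ref{prop:1} to split $\widetilde{f}$ as a sum of distinguished terms $\widetilde{a}_i\widetilde{m}_i$ together with an element of $\widetilde{X}_{10}\widetilde{M}_{k-10}(\gamt)_{\zkp}$, and then rewrite every summand so that it lies in $\widetilde{X}_{10}\widetilde{M}_{k-8}(\gamt)_{\zkp}$. First, since $k=12n$ and $W(f)\equiv 0 \bmod p$, parts~(2) and~(3) of Proposition~\ref{prop:1} give $a_i\in\zkp$ and $h\in M_{k-10}(\gamt)_{\zkp}$ with $\widetilde{f}=\sum_i \widetilde{a}_i\widetilde{m}_i+\widetilde{X}_{10}\widetilde{h}$. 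Because $k-10\equiv 2\pmod{12}$, Lemma~\ref{lem:11} will furnish $g_0\in M_{k-8}(\gamt)_{\zkp}$ with $\widetilde{h}=\widetilde{g}_0$, so the last summand is already $\widetilde{X}_{10}\widetilde{g}_0$.

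It then remains to rewrite each $\widetilde{m}_i$ as $\widetilde{X}_{10}\widetilde{g}_i$ with $g_i\in M_{k-8}(\gamt)_{\zkp}$. By definition $m_i=X_{12t}X_{\star}^{s}Y_{12}^{n-i}$, where $\star=48$ when $p=2$ and $t\in\{1,2,3\}$, or $\star=36$ when $p=3$ and $t\in\{1,2\}$. Lemma~\ref{lem:10} supplies $X_{12t}\equiv X_{10}X_{16}^{t-1}\bmod p$ for the relevant~$t$, so I can write $\widetilde{m}_i=\widetilde{X}_{10}\widetilde{\mu}_i$ with $\mu_i=X_{16}^{t-1}X_{\star}^{s}Y_{12}^{n-i}$. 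A short weight count shows $\mu_i$ has weight $k-12$, $k-8$, or $k-4$ according to whether $t=1,2$, or $3$. For $t=1$ I would take $g_i=X_4\mu_i\in M_{k-8}(\gamt)_{\zkp}$, using $X_4\equiv 1\bmod p$; for $t=2$ simply $g_i=\mu_i$; and for $t=3$ (which only occurs when $p=2$) I would take $g_i=X_{28}X_{48}^{s}Y_{12}^{n-i}\in M_{k-8}(\gamt)_{\zkp}$, using $X_{28}\equiv X_{16}^{2}\bmod 2$ from Lemma~\ref{lem:10}. In each case $\widetilde{m}_i=\widetilde{X}_{10}\widetilde{g}_i$, and setting $g=\sum_i a_ig_i+g_0\in M_{k-8}(\gamt)_{\zkp}$ yields $\widetilde{f}=\widetilde{X}_{10}\widetilde{g}$.

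The delicate point will be the case $t=3$ at $p=2$: the natural factorization puts $\mu_i$ two weight units \emph{above} the target $k-8$, and no general inclusion $\widetilde{M}_{\ell}\subset\widetilde{M}_{\ell-4}$ is available to lower the weight. The rescue is the specific mod-$2$ congruence $X_{28}\equiv X_{16}^{2}$ in Lemma~\ref{lem:10}, which trades the weight-$32$ factor $X_{16}^{2}$ for the weight-$28$ form $X_{28}$ and so drops the total weight from $k-4$ to $k-8$ without altering the Fourier expansion modulo~$2$. All other cases reduce to multiplying by $X_4\equiv 1\bmod p$ or invoking Lemma~\ref{lem:11}, and are routine.
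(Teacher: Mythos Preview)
Your proposal is correct and follows essentially the same approach as the paper: decompose $\widetilde f$ via Proposition~\ref{prop:1}, handle the $\widetilde X_{10}\widetilde h$ piece with Lemma~\ref{lem:11}, and use the congruences of Lemma~\ref{lem:10} to write each $\widetilde m_i$ as $\widetilde X_{10}\widetilde g_i$ with $g_i\in M_{k-8}(\gamt)_{\zkp}$. The paper compresses the treatment of the $m_i$ into a single sentence, whereas you spell out the case analysis (including the $t=3$, $p=2$ trick $X_{28}\equiv X_{16}^2\bmod 2$), but the argument is the same.
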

\begin{proof}
  By Lemma \ref{lem:10}, the statement for $f = m_{i}$ is true for all $i$.
  Then by Proposition \ref{prop:1}, we have
  $f \equiv X_{10}(g + h) \mod{p}$ with $g \in M_{k - 8}(\gamt)_{\zkp}$ and
  $h \in M_{k - 10}(\gamt)_{\zkp}$.
  By Lemma \ref{lem:11}, we have our assertion.
\end{proof}

\section{Proof of the main results}
In this section, we give proofs of Theorem \ref{thm:1},
Corollary \ref{cor:1} and Theorem \ref{thm:2}.

We have
\begin{math}
  \wm_{k}(\gamt)_{\mathcal{O}_{\mathfrak{p}}}
  =
  \wm_{k}(\gamt)_{\Z_{(p)}}\otimes_{\F_{p}}
  \mathcal{O}_{\mathfrak{p}}/\mathfrak{p}.
\end{math}
Therefore Theorem \ref{thm:1} is reduced to the case of
$\mathcal{O}_{\mathfrak{p}} = \Z_{(p)}$, where $p$ is a prime number.
We also note that the statement of Theorem \ref{thm:1} for $\nu \ge 2$
is reduced to the case of $\nu = 1$ by repeatedly using the result.
This method was used in \cite{rasmussen2009higher}.

As we remarked before, the statement of Theorem \ref{thm:1}
was proved in \cite{choi2013sturm} if $k$ is even and $p \ge 5$.
Thus in this section, we assume
$k \equiv 0 \mod{2},\ p = 2, 3$
or $k \equiv 1 \mod{2}$.

First, we introduce the following notation, which is similar to
mod $p$ diagonal vanishing order defined by
Richter and Raum \cite{richter2015sturm}.
Let $\wf$ be a $\F_{p}$-coefficients formal power series as follows;
\begin{equation*}
  \wf = \sum_{
    \begin{subarray}{c}
      m, r, n \in \Q\\
      m, n, 4mn - r^{2}\ge 0
    \end{subarray}}
    \widetilde{a}_{f}(m, r, n)q_{1}^{m}q_{12}^{r}q_{2}^{n}
    \in \bigcup_{N \in \Z_{\ge 1}}\F_{p}[q_{12}^{1/N}, q_{12}^{-1/N}]
    \prsrs{q_{1}^{1/N}, q_{2}^{1/N}}.
\end{equation*}
We define $v_{p}(\wf)$ by
\begin{equation*}
  v_{p}(\wf) = \sup
  \left\{
    A \in \R\bigm |
    \begin{array}{l}
      \widetilde{a}_{f}(m, r, n) = 0, \\
      \text{for all } m, r, n \in \Q \text{ with }
      0 \le m, n < A
    \end{array}
  \right\}.
\end{equation*}
By definition, we have
\begin{equation}
  \label{eq:11}
  v_{p}(\wf \wg) \ge \max\left\{
    v_{p}(\wf), v_{p}(\wg)
  \right\},
\end{equation}
for $\wf, \wg \in \bigcup_{N \in \Z_{\ge 1}}\F_{p}[q_{12}^{1/N}, q_{12}^{-1/N}]
\prsrs{q_{1}^{1/N}, q_{2}^{1/N}}$.
We note that
$v_{p}(\wf) > A$ is equivalent to
$\widetilde{a}_{f}(m, r, n) = 0$ for all $m, n \le A$,
where $A \in \R$.

For the proof of Theorem \ref{thm:1}, we introduce the following three lemmas.

\begin{Lem}
  \label{lem:3}
  Let $p$ be a prime number and
  $\wf \in \wm_{k}(\gamt)_{\zkp}$ with $k \in \Z_{\ge 0}$.
  Then we have $v_{p}(\wx_{10} \wf) = v_{p}(\wf) + 1$ and $v_p(\wx_{35}\wf)\ge v_p(\wf)+2$.
\end{Lem}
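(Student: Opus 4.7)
The plan is to exploit the multiplicativity of the leading-term operator $\ldt$ under the monomial order defined in Section~\ref{sec:3}. Since that order is translation invariant (compatible with multiplication of monomials) and $\F_p$ is a field, the ring $\F_p[q_{12},q_{12}^{-1}]\prsrs{q_1,q_2}$ is an integral domain and one has $\ldt(\wg\wf) = \ldt(\wg)\cdot\ldt(\wf)$ for all nonzero $\wg,\wf$ in it.

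The key preliminary is to show that for any reduction $\wf$ of a Siegel modular form, $v_p(\wf)$ coincides with the $m$-exponent of $\ldt(\wf)$. Writing $\ldt(\wf) = c \cdot q_1^{m_0}q_{12}^{r_0}q_2^{n_0}$ with $c \in \F_p^\times$, the standard symmetry $\widetilde{a}_f(n,r,m) = (-1)^k\widetilde{a}_f(m,r,n)$ (coming from $\mat{U}{0}{0}{U} \in \gamt$ with $U=\mat{0}{1}{1}{0}$) implies that the permuted triple $(n_0,r_0,m_0)$ also carries a nonzero coefficient. Minimality of $(m_0,r_0,n_0)$ then forces $m_0 \le n_0$, and repeated use of the same symmetry shows that every nonzero triple $(m',r',n')$ satisfies $m',n' \ge m_0$, so $\min(m',n') \ge m_0$; since $\min(m_0,n_0) = m_0$ is itself attained, this gives $v_p(\wf) = m_0$.

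With this identification, the lemma is immediate. From $\ldt(\wx_{10}) = q_1 q_{12}^{-1}q_2^2$, multiplicativity gives $\ldt(\wx_{10}\wf) = c\cdot q_1^{m_0+1}q_{12}^{r_0-1}q_2^{n_0+2}$, whose $m$-exponent $m_0+1$ still satisfies $m_0+1 \le n_0+2$, so $v_p(\wx_{10}\wf) = m_0+1 = v_p(\wf)+1$. Similarly $\ldt(\wx_{35}) = q_1^2 q_{12}^{-1}q_2^3$ yields $\ldt(\wx_{35}\wf) = c\cdot q_1^{m_0+2}q_{12}^{r_0-1}q_2^{n_0+3}$, hence $v_p(\wx_{35}\wf) = m_0+2 \ge v_p(\wf)+2$.

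The main obstacle is the identification step: $v_p$ is a priori defined in terms of both $m$ and $n$, not of $m$ alone, and it is the modular symmetry under $U$ that collapses it to the single $m$-exponent of $\ldt(\wf)$; a small extra argument is needed in the marginal case of odd weight in characteristic $2$, where the diagonal equality $m_0=n_0$ imposes no automatic vanishing. Once the identification is in hand, the conclusion follows purely from multiplicativity of $\ldt$ in the integral domain $\F_p[q_{12},q_{12}^{-1}]\prsrs{q_1,q_2}$.
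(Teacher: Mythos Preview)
Your key identification is wrong: $v_p(\wf)$ is \emph{not} the $q_1$-exponent $m_0$ of $\ldt(\wf)$. By the definition of $v_p$ (and the remark following it), $v_p(\wf) > A$ holds iff $\widetilde a_f(m,r,n)=0$ whenever \emph{both} $m\le A$ and $n\le A$; hence
\[
  v_p(\wf)\;=\;\min\bigl\{\max(m,n)\ :\ \widetilde a_f(m,r,n)\ne 0\bigr\}.
\]
Your symmetry argument establishes only that $\min\{\min(m,n):\widetilde a_f(m,r,n)\ne 0\}=m_0$, which is a different quantity. Already $\wf=\wx_{35}$ separates the two: the leading monomial is $q_1^{2}q_{12}^{-1}q_2^{3}$, so $m_0=2$, yet $v_p(\wx_{35})=3$ because every coefficient at $(2,r,2)$ vanishes (odd weight forces $a(n,r,m)=-a(m,r,n)$ over~$\Z$) while $\widetilde a_{X_{35}}(2,-1,3)\ne 0$. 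Even weight does not help: $\wx_{35}^{2}$ has $m_0=4$ but $v_p(\wx_{35}^{2})=5$ for odd~$p$.

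Thus multiplicativity of $\ldt$ in the integral domain, while correct, is not enough: knowing $\ldt(\wx_{10}\wf)$ alone does not determine $v_p(\wx_{10}\wf)$. The paper instead uses the Borcherds product to write $\wx_{10}=q_1 q_2\cdot u$ with $u$ a \emph{unit} in $\F_p(q_{12})\prsrs{q_1,q_2}$. Multiplication by $q_1 q_2$ shifts \emph{every} nonzero $(m,n)$ to $(m+1,n+1)$, hence shifts each $\max(m,n)$ by exactly~$1$, and multiplication by a unit (applied in both directions) preserves the minimum of these maxima. For $X_{35}$ the analogous product carries an extra factor $(q_1-q_2)$, which is why only the inequality $\ge v_p(\wf)+2$ is asserted; indeed the gap can exceed~$2$ (e.g.\ $\wf=\wx_{4}$ gives $v_p(\wx_{35}\wx_4)-v_p(\wx_4)=3$), contrary to what your computation would yield.
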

\begin{proof}
  We regard $\wx_{10}$ and $\wx_{35}$ as images
  in the ring of formal power series
  $\F_{p}(q_{12})\prsrs{q_{1}, q_{2}}$.
  By the Borcherds product of $X_{10}$ (cf. \cite{gritsenko1997siegel}),
  we have $\widetilde{X}_{10} = q_{1} q_{2} u$ where $u$ is a unit in
  $\F_{p}(q_{12})\prsrs{q_{1}, q_{2}}$.
  Similarly, we have $\widetilde{X}_{35} = q_{1}^2 q_{2}^2 (q_1-q_2)v$ for
  some unit $v$ in $\F_{p}(q_{12})\prsrs{q_{1}, q_{2}}$ (cf. \cite{gritsenko1996igusa}).
  The assertion of the lemma follows from these facts.
\end{proof}
\begin{Rem}
  It is not easy to give an upper bound for $v_{p}(\wx_{35}\wf) - v_{p}(\wf)$
  because of the factor $q_{1} - q_{2}$ in the Borcherds product of
  $X_{35}$.
\end{Rem}

\begin{Lem}
  \label{lem:12}
  Let $p$ be a prime number and
  \begin{equation*}
    f = \sum_{m, n \ge 0}a_{f}(m, n)q_{1}^{m}q_{2}^{n}
    \in \left( M_{k}(\gamo) \otimes M_{k}(\gamo)
  \right)_{\zkp}.
  \end{equation*}
  If $a_{f}(m, n)\equiv 0 \mod{p}$ for all $m, n \le [k/12]$,
  then $f \equiv 0 \mod{p}$.
  In particular, for $g \in M_{k}(\gamt)_{\zkp}$,
  we have $W(g) \equiv 0 \mod{p}$ if $v_{p}(\widetilde{g})
  > [k/12]$ and
  $W'(g) \equiv 0 \mod{p}$ if $v_{p}(\widetilde{g}) > [(k + 1)/12]$.
\end{Lem}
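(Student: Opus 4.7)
The plan is to deduce the first statement from two successive applications of the classical one-variable Sturm bound for $M_{k}(\gamo)$; the tensor-product hypothesis serves only to guarantee that slicing $f$ in either variable yields a genuine one-variable modular form with coefficients in $\zkp$.

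Concretely, write $f = \sum_{i} u_{i} \otimes v_{i}$ with $u_{i}, v_{i} \in M_{k}(\gamo)$, and for each $n \ge 0$ put
\begin{equation*}
  b_{n} := \sum_{i} a_{v_{i}}(n) u_{i} \in M_{k}(\gamo),
\end{equation*}
so that $f = \sum_{n} b_{n}(q_{1}) q_{2}^{n}$. Since $a_{b_{n}}(m) = a_{f}(m, n) \in \zkp$, one has $b_{n} \in M_{k}(\gamo)_{\zkp}$. For each $n \le [k/12]$ the hypothesis gives $a_{b_{n}}(m) \equiv 0 \bmod{p}$ for $m \le [k/12]$, so Sturm's bound forces $b_{n} \equiv 0 \bmod{p}$, and therefore $a_{f}(m, n) \equiv 0 \bmod{p}$ for all $m \ge 0$ and every $n \le [k/12]$. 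Swapping the roles of the two variables, for fixed $m \ge 0$ the series $c_{m}(q_{2}) := \sum_{n} a_{f}(m, n) q_{2}^{n}$ equals $\sum_{i} a_{u_{i}}(m) v_{i}$ and hence lies in $M_{k}(\gamo)_{\zkp}$; the previous step supplies $a_{c_{m}}(n) \equiv 0 \bmod{p}$ for $n \le [k/12]$, so a second application of Sturm yields $c_{m} \equiv 0 \bmod{p}$ and proves the first assertion.

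For the ``In particular'' part, one has $W(g) \in \sym^{2}(M_{k}(\gamo))_{\zkp} \subset (M_{k}(\gamo) \otimes M_{k}(\gamo))_{\zkp}$ and $W'(g) \in \wedge^{2}(M_{k + 1}(\gamo))_{\zkp} \subset (M_{k + 1}(\gamo) \otimes M_{k + 1}(\gamo))_{\zkp}$, with Fourier coefficients
\begin{equation*}
  a_{W(g)}(m, n) = \sum_{r} a_{g}(m, r, n), \qquad
  a_{W'(g)}(m, n) = \sum_{r > 0} r\, a_{g}(m, r, n).
\end{equation*}
The hypothesis $v_{p}(\widetilde{g}) > [k/12]$ immediately implies $a_{W(g)}(m, n) \equiv 0 \bmod{p}$ for $m, n \le [k/12]$, so the first part of the lemma with weight $k$ gives $W(g) \equiv 0 \bmod{p}$. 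Similarly, $v_{p}(\widetilde{g}) > [(k + 1)/12]$ yields $a_{W'(g)}(m, n) \equiv 0 \bmod{p}$ for $m, n \le [(k + 1)/12]$, and the first part applied with weight $k + 1$ gives $W'(g) \equiv 0 \bmod{p}$. The argument presents no genuine obstacle; the only point meriting care is the verification that each slice $b_{n}$ really lies in $M_{k}(\gamo)_{\zkp}$, which is immediate from the identity $a_{b_{n}}(m) = a_{f}(m, n) \in \zkp$.
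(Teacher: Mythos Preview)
Your argument is correct and at its core is the same as the paper's: both reduce the two-variable vanishing to two applications of the one-variable Sturm bound for $M_{k}(\gamo)$. The packaging differs slightly: the paper phrases this as injectivity of the tensor-squared truncation map
\[
\sym^{2}\bigl(\wm_{k}(\gamo)_{\zkp}\bigr)\ \hookrightarrow\ \F_{p}\prsrs{q}/(q^{[k/12]+1})\otimes_{\F_{p}}\F_{p}\prsrs{q}/(q^{[k/12]+1}),
\]
invoking the identification $\sym^{2}(M_{k}(\gamo))_{\zkp}\otimes\F_{p}=\sym^{2}(\wm_{k}(\gamo)_{\zkp})$ from Lemma~\ref{lem:1}. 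Your slicing argument avoids this base-change identification altogether and works directly at the level of Fourier coefficients, which is a mild simplification; it also handles the full tensor product $(M_{k}(\gamo)\otimes M_{k}(\gamo))_{\zkp}$ uniformly, whereas the paper's written proof literally treats only the $\sym^{2}$ case (though the extension is immediate). Conversely, the paper's formulation makes the functoriality transparent in one line. Either way, the substance is identical.
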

\begin{proof}
  By the original Sturm's theorem \cite{sturm1987congruence}, the map
  \begin{equation*}
    \wm_{k}(\gamo)_{\zkp} \hookrightarrow \F_{p}\prsrs{q}/(q^{[k/12] + 1})
  \end{equation*}
  is injective.
  Therefore we have the following injective map
  \begin{multline*}
    \sym^{2}(M_{k}(\gamo))_{\zkp}\otimes_{\zkp}\F_{p} =
    \sym^{2}(\wm_{k}(\gamo)_{\zkp})\\
    \hookrightarrow
    F_{p}\prsrs{q}/(q^{[k/12] + 1})\otimes_{\F_{p}}
    F_{p}\prsrs{q}/(q^{[k/12] + 1}).
  \end{multline*}
  Here we note that
  $\sym^{2}(M_{k}(\gamo)_{\zkp}) = \sym^{2}(M_{k}(\gamo))_{\zkp}$,
  as we remarked in the proof of Lemma \ref{lem:1}.
  Since the image of $\wf$ by this map vanishes,
  we have $\wf = 0$.
\end{proof}

\begin{Lem}
  \label{lem:4}
  We define $f_{k} \in M_{k}(\gamt)_{\Z}$ for $k = 35, 39, 41, 43$ and $47$ as
  follows.
  \begin{gather*}
    f_{35} = X_{35}, \quad f_{39} = X_{4} X_{35}, \quad f_{41} = X_{6}X_{35},
    \quad
    f_{43} = X_{4}^{2}X_{35}, \quad f_{47} = X_{12} X_{35}.
  \end{gather*}
  Then $\ldt(f_{k}) = q_{1}^{2}q_{12}^{-1}q_{2}^{3}$
  for $k = 35, 39, 41, 43$
  and $\ldt(f_{47}) = q_{1}^{3}q_{12}^{-2}q_{2}^{4}$.
\end{Lem}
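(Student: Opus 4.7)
The plan is to reduce the lemma to multiplicativity of the leading-term map $\ldt$ under multiplication, which in turn follows from the monomial order being strictly compatible with multiplication.

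The first step is to verify that for monomials $\mu < \mu'$ in the order defined in Section \ref{sec:3}, one has $\mu \nu < \mu' \nu$ for every monomial $\nu$. This is immediate from the three clauses defining the order: adding the exponents of $\nu$ to both sides preserves whichever of the strict inequalities ($m < m'$, or $m = m'$ with $n < n'$, or $m = m'$ and $n = n'$ with $r < r'$) distinguished $\mu$ from $\mu'$. Given this strict monotonicity, a standard argument shows that for nonzero $f, g \in K[q_{12}, q_{12}^{-1}]\prsrs{q_1, q_2}$ with $\ldt(f) = c u$ and $\ldt(g) = c' u'$, every product $v v'$ of a monomial $v$ appearing in $f$ with a monomial $v'$ appearing in $g$ satisfies $v v' \geq u u'$, with equality only when $(v, v') = (u, u')$. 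Hence the monomial $u u'$ appears in $f g$ with coefficient exactly $c c' \neq 0$, and no smaller monomial appears, so $\ldt(f g) = \ldt(f) \cdot \ldt(g)$.

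Applying this multiplicativity to each $f_k$ and using the normalizations $\ldt(X_4) = \ldt(X_6) = 1$, $\ldt(X_{12}) = q_1 q_{12}^{-1} q_2^2$, and $\ldt(X_{35}) = q_1^2 q_{12}^{-1} q_2^3$ gives all the claims at once. For $k \in \{35, 39, 41, 43\}$ the factor multiplying $X_{35}$ is either $1$, $X_4$, $X_6$, or $X_4^2$, each with leading term $1$, so $\ldt(f_k) = \ldt(X_{35}) = q_1^2 q_{12}^{-1} q_2^3$. For $k = 47$ we get $\ldt(f_{47}) = \ldt(X_{12}) \cdot \ldt(X_{35}) = (q_1 q_{12}^{-1} q_2^2)(q_1^2 q_{12}^{-1} q_2^3) = q_1^3 q_{12}^{-2} q_2^4$.

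I expect no substantial obstacle in this proof: the content of the lemma is essentially that the leading-term map is a monoid homomorphism under the chosen order, and the remaining check is a one-line substitution. The only point that might deserve a careful word is the uniqueness of the minimum achieving $u u'$ in $f g$, which is precisely what strict monotonicity delivers.
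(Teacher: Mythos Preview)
Your approach is correct and essentially the same as the paper's one-line proof: both rest on multiplicativity of $\ldt$ (which you justify explicitly via strict monotonicity of the monomial order, while the paper uses it tacitly) and then substitute the known leading terms of the factors.

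One computational slip to fix: you quote $\ldt(X_{12}) = q_1 q_{12}^{-1} q_2^{2}$, copying a typo from the paper's normalization paragraph; the correct value, as the paper's own proof of this very lemma records, is $\ldt(X_{12}) = q_1 q_{12}^{-1} q_2$. With the value you wrote, the product $(q_1 q_{12}^{-1} q_2^{2})(q_1^{2} q_{12}^{-1} q_2^{3})$ equals $q_1^{3} q_{12}^{-2} q_2^{5}$, not $q_2^{4}$, so your final equality for $f_{47}$ comes out right only because the arithmetic error cancels the typo. Replace $q_2^{2}$ by $q_2$ and the computation is clean.
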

\begin{proof}
  This follows from $\ldt(X_{4}) = \ldt(X_{6}) = 1$,
  $\ldt(X_{12}) = q_{1}q_{12}^{-1}q_{2}$ and
  $\ldt(X_{35}) = q_{1}^{2}q_{12}^{-1}q_{2}^{3}$.
\end{proof}

\subsection{Proof of Theorem ~\ref{thm:1} for $p = 2, 3$ and even $k$}
\label{subsec:5.2}
Let $p = 2, 3$, $k \in 2\Z_{\ge 0}$ and $f \in M_{k}(\gamt)_{\zkp}$.
We assume
\begin{equation}
  \label{eq:1}
  v_{p}(\wf) > b_{k},
\end{equation}
where $b_{k}$ is given in Theorem \ref{thm:1}.
We prove the statement of Theorem \ref{thm:1} by the induction on $k$.
First, we assume $k < 10$.
Then the statement is true because $M_{k}(\gamt)$ for $k = 4, 6, 8$ is one-dimensional
and $\ldt(X_{4}) = \ldt(X_{6}) = \ldt(X_{4}^{2}) = 1$.

Next, we assume $k \ge 10$ and the statement is true if the weight is
strictly less than $k$.
By \eqref{eq:1} and Lemma \ref{lem:12}, we have $W(f) \equiv 0 \mod{p}$.
If $12 \nmid k$, then by
Proposition \ref{prop:1}, there exists
$g \in M_{k - 10}(\gamt)_{\zkp}$ such that $f \equiv X_{10} g \mod{p}$.
By \eqref{eq:1} and Lemma \ref{lem:3}, we have
$v_{p}(\widetilde{g}) > b_{k - 10}$.
By the induction hypothesis, we have $g \equiv 0 \mod{p}$.
Thus we have the assertion of Theorem \ref{thm:1} in this case.
Next we assume $12 \mid k$. Then by Corollary \ref{cor:3}, there exists $g \in M_{k - 8}(\gamt)_{\zkp}$
such that $f \equiv X_{10}g \mod{p}$.
Since $b_{k - 10} \ge [(k - 8) / 12]$ for $k \ge 10$,
we have $W(g) \equiv 0 \mod{p}$ by
\eqref{eq:1}, Lemma \ref{lem:3} and Lemma \ref{lem:12}.
Therefore $W''(f) \equiv x_{12}W(g) \equiv 0 \mod{p}$.
By Proposition \ref{prop:1}, there exists $h \in M_{k - 20}(\gamt)_{\zkp}$ such that
$f \equiv X_{10}^{2}h \mod{p}$.
Since $v_{p}(\widetilde{h}) > b_{k - 20}$,
we have $h \equiv 0 \mod{p}$ by the induction hypothesis.
Thus we have $f\equiv 0 \mod{p}$. This completes the proof.\qed

\subsection{Proof of Theorem~\ref{thm:1} for the case $p \nmid 2\cdot 3$
and odd $k$}
Let $p$ be a prime number with $p \ge 5$ and $f \in M_{k}(\gamt)_{\Z_{(p)}}$
with $k$ odd.
We assume
\begin{equation}
  v_{p}(\wf) > b_{k}.
  \label{eq:3}
\end{equation}

We prove the theorem by the induction on $k$.
Note that $M_{k}(\gamt) = \left\{0\right\}$ if $k$ is odd and $k < 35$ or $k
= 37$.
First assume that $ 0 \le k - 35 < 10$ with $k \ne 37$.
Then $M_{k}(\gamt)$ is one-dimensional and spanned by $f_{k}$ given in Lemma
\ref{lem:4}.
By Lemma \ref{lem:4}, the assertion of the theorem holds if $k - 35 < 10$.

Next we assume $k -35 \ge 10$ and the assertion of the theorem holds if
the weight is strictly less than $k$.
By Igusa \cite{igusa1979ring}, there exists $g \in M_{k - 35}(\gamt)_{\zkp}$
such that $f = X_{35} g$.
By Lemma \ref{lem:8}, we have
\begin{align}
  W'(f) = W'(X_{35}) W(g) = \alpha_{36} W(g). \label{eq:4}
\end{align}
By $[(k + 1)/12] \le b_{k}$ and Lemma \ref{lem:12}, we have $W'(f)
\equiv 0 \bmod{p}$.
Therefore,
we have $W(g) \equiv 0 \bmod{p}$ by \eqref{eq:4}. Then by Lemma \ref{lem:2}, there exists
$g' \in M_{k- 45}(\gamt)_{\Z_{(p)}}$ such that
$g \equiv X_{10} g' \bmod{p}$.
We put $f' = X_{35}g'$. Then we have $f \equiv X_{10}f' \bmod{p}$.
By \eqref{eq:3} and Lemma \ref{lem:3}, we have
$v_{p}(\wf') > b_{k - 10}$.
By the induction hypothesis, we have $f' \equiv 0 \bmod{p}$.
Thus $f \equiv 0 \bmod{p}$. This completes the proof. \qed

\subsection{Proof of Theorem~\ref{thm:1} for $p = 2, 3$ and odd $k$}
In this subsection, we assume $p = 2, 3$ and $k$ is odd.
Since the case when $k = 48 + 35 = 83$ is special in our proof, we prove the
following two lemmas.
\begin{Lem}
  \label{lem:6}
  Let $\wf \in \wm_{48}(\gamt)_{\zkp}$ with $\wf \ne 0$ and
  $\ldt(\wf) = \alpha q_{1}^{a}q_{12}^{b}q_{2}^{c}$ be the leading term of
  $\wf$. Here $\alpha \in \F_{p}^{\times}$.
  Assume $\ww_{\zkp, 48}(\wf) = 0$.
  Then we have $a \le 4$ and $c \le 4$.
\end{Lem}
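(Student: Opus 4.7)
My plan is to combine Proposition~\ref{prop:1} with Nagaoka's structure theorem (Theorem~\ref{thm:4.7}) to pin $\wf$ down to an explicit monomial, and then read off the leading term using the Borcherds product description of $X_{10}$ in Lemma~\ref{lem:3}.

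First, since $48 = 12 \cdot 4$, Proposition~\ref{prop:1} writes $\wf = \sum_i a_i \wsm_i + \wx_{10}\widetilde{g}$ for some $g \in M_{38}(\gamt)_{\zkp}$. Lemma~\ref{lem:10} tells us that each of $X_{12t}$ ($t \in \{1,2,3\}$), $X_{36}$, and (for $p=2$) $X_{48}$ appearing in the $m_i$'s is $\equiv X_{10}\cdot(\text{polynomial in the basic generators})\pmod p$, so each $\wsm_i$ is a multiple of $\wx_{10}$. Absorbing everything into the $\wx_{10}$-summand, I obtain $\wf = \wx_{10}\widetilde{H}$ for some nonzero $\widetilde{H} \in \wm_{38}(\gamt)_{\zkp}$. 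Then Theorem~\ref{thm:4.7} gives $\widetilde{H}$ as a polynomial in $\wx_{10}, \wy_{12}, \wx_{16}$, and the Diophantine equation $10c + 12f + 16e = 38$ in $\Z_{\ge 0}$ has the unique solution $(c,e,f) = (1,1,1)$. Hence $\widetilde{H} = \lambda\wx_{10}\wx_{16}\wy_{12}$ for some $\lambda \in \F_p^{\times}$, and $\wf = \lambda\wx_{10}^{2}\wx_{16}\wy_{12}$.

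Finally, I compute $\ldt(\wf)$ by combining the leading $(m,n)$-bidegrees of the three factors. By Lemma~\ref{lem:3}, $\wx_{10} = q_1 q_2 u$ in $\F_p(q_{12})\prsrs{q_1,q_2}$ with $u$ a unit, so the leading bidegrees of $\wx_{10}, \wx_{16}, \wy_{12}$ are $(1,1), (1,1), (0,1)$ respectively (the latter two being visible from $W(X_{16}) = x_4 x_{12}$ and $W(Y_{12}) = y_{12}$). These sum to $(3,4)$. A support-constraint argument shows that the only decomposition of $(3,4)$ as a triple sum contributing to $\wf$ is via the individual leading bidegrees: any decomposition using a non-leading term in one factor forces a non-existent smaller-lex support point in another factor. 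The resulting coefficient at $(3,4)$ is $\lambda$ times a product of units in $\F_p(q_{12})$ with the nonzero leading coefficient of $\wx_{16}\wy_{12}$, hence nonzero in the integral domain $\F_p[q_{12},q_{12}^{-1}]$. So $\ldt(\wf)$ sits at bidegree $(3,4)$, giving $a = 3 \le 4$ and $c = 4 \le 4$.

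The main obstacle is the no-cancellation check in the last step, which depends crucially on the unit property in Lemma~\ref{lem:3}; once that is in place, the bound follows immediately from the rigid structure imposed by Nagaoka's theorem.
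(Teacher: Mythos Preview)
Your argument breaks down at step 4, the Diophantine equation $10c + 12f + 16e = 38$. You are tacitly assuming that the polynomial $Q$ in Theorem~\ref{thm:4.7} is isobaric of weight $38$, but Nagaoka's theorem makes no such claim and in fact it is false. For $p = 2, 3$ we have $X_{4} \equiv X_{6} \equiv 1 \pmod{p}$, so the grading collapses modulo $p$: for instance $\wx_{4}^{8}\wx_{6} = 1$ lies in $\wm_{38}(\gamt)_{\zkp}$, and its Nagaoka polynomial is the constant $1$, corresponding to the non-solution $(c,e,f) = (0,0,0)$ of your Diophantine equation. The space $\wm_{38}(\gamt)_{\zkp}$ is $16$-dimensional (equal to $\dim_{\C}M_{38}(\gamt)$), not one-dimensional; the paper exhibits an explicit basis $S'$ consisting of all monomials $\wx_{10}^{a}\wy_{12}^{b}\wx_{16}^{c}$ with various non-isobaric multi-degrees. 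So your conclusion $\widetilde{H} = \lambda\,\wx_{10}\wx_{16}\wy_{12}$ is simply wrong, and the rest of the argument collapses.

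There is also a minor sloppiness in step~2: after absorbing the $\wsm_{i}$ you get factors such as $\wy_{12}^{3} \in \wm_{36}$ or $\wx_{16}^{2}\wy_{12} \in \wm_{44}$, not elements of $\wm_{38}$; you would need an extra argument (or an appeal to Corollary~\ref{cor:3}, which lands you in $\wm_{40}$) to place $\widetilde{H}$ in a single weight space. But even once that is fixed, the Diophantine step is still fatal for the reason above. The paper's proof instead produces an explicit $19$-element basis $S \cup T$ of $\ker(\ww_{\zkp, 48})$, computes $\ldt$ on each basis element via the formula $\ldt(\wx_{10}^{a}\wy_{12}^{b}\wx_{16}^{c}) = q_{1}^{a+c}q_{12}^{-a}q_{2}^{a+b+c}$, checks that these leading terms are pairwise distinct, and then reads off $a \le 4$, $c \le 4$ from the fact that every basis monomial has total degree at most $4$.
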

\begin{proof}
  By Proposition \ref{prop:1}, we have
  \begin{equation*}
    \ker(\ww_{\zkp, 48}) = \bigoplus_{i}\F_{p}\wsm_{i}
    \oplus \wx_{10} \wm_{38}(\gamt)_{\zkp}.
  \end{equation*}
  Here $i = 1, 2, 3$ if $p = 2$ and $i = 1, 2, 4$ if $p = 3$.
  For $\widetilde{g} \in \wm_{48}(\gamt)_{\zkp}$, let $Q_{g}=
  \sum_{a, b,
    c}\gamma_{a, b, c}x^{a}y^{b}z^{c}$ be
  a $\F_{p}$-coefficients polynomial
  such that $\widetilde{g} = Q_{g}(\wx_{10}, \wy_{12}, \wx_{16})$ as in
  Theorem \ref{thm:4.7}.
  Since
  \begin{equation}
    \label{eq:10}
    \ldt\left(\wx_{10}^{a}\wy_{12}^{b}\wx_{16}^{c}\right) = q_{1}^{a + c}
    q_{12}^{-a}q_{2}^{a + b + c},
  \end{equation}
  there exists a unique monomial
  $\wx_{10}^{a_{0}}\wy_{12}^{b_{0}}\wx_{16}^{c_{0}}$ with $\gamma_{a_{0}, b_{0}, c_{0}} \ne 0$
  such that
  $\ldt(\widetilde{g}) = \ldt(\gamma_{a_0, b_0, c_0}\wx_{10}^{a_{0}}\wy_{12}^{b_{0}}\wx_{16}^{c_{0}})$.
  We put $\phi(\widetilde{g}) = \wx_{10}^{a_{0}} \wy_{12}^{b_{0}}\wx_{16}^{c_{0}}$.
  We define a set $S'$ by
  \begin{gather*}
    \left\{
      1,
      \wx_{16},
      \wy_{12},
      \wx_{10},
      \wx_{16}^{2},
      \wy_{12} \wx_{16},
      \wy_{12}^{2},
      \wx_{10} \wx_{16},
      \wx_{10} \wy_{12},
      \wx_{10}^{2},\right.\\
    \left.
      \wx_{10} \wx_{16}^{2},
      \wx_{10} \wy_{12} \wx_{16},
      \wx_{10} \wy_{12}^{2},
      \wx_{10}^{2} \wx_{16},
      \wx_{10}^{2} \wy_{12},
      \wx_{10}^{3}
    \right\}.
  \end{gather*}
  Then $S'$ forms a basis of $\wm_{38}(\gamt)_{\zkp}$.
  This follows from $\dim_{\F_{p}}(\wm_{38}(\gamt)_{\zkp}) =
  \dim_{\C}M_{38}(\gamt) = 16$ and Lemma \ref{lem:10}.
  We put $S = \left\{\wx_{10} a \bigm | a \in S'\right\}$.
  We define a set $T$ by
  \begin{equation*}
    T =
    \begin{cases}
      \left\{\wsm_{1}, \wsm_{2}, \wsm_{3}\right\} & \text{ if } p = 2,\\
      \left\{\wsm_{1}, \wsm_{2}, \wsm_{4}\right\} & \text{ if } p = 3.
    \end{cases}
  \end{equation*}
  Then $S \cup T$ forms a basis of $\ker(\ww_{\zkp, 48})$.
  We have $\phi(s) = s$ except when $p = 3$ and $s = m_{4}$ for $s \in S\cup T$.
  If $p = 3$, we have $\phi(\wsm_{4}) = \wx_{10}\wy_{12}\wx_{16}^{2}$.
  Thus we see that
  $\phi$ on $S \cup T$ is injective.
  Therefore if $\wf \in \ker(\ww_{\zkp, 48})$ with $\wf \ne 0$, then
  there exists a unique $s \in S \cup T$ such that
  $\ldt(\wf) = \alpha \ldt(s)$ with $\alpha \ne 0$.
  Note that degrees of
  monomials $\left\{\phi(s) \bigm | s \in S\cup T\right\}$
  are less than or equal to $4$.
  Then by \eqref{eq:10}, we have the assertion of the lemma.
\end{proof}

\begin{Lem}
  \label{lem:7}
  Let $k = 83$, $\wf = \wx_{35}
  \widetilde{g}\in \wm_{k}(\gamt)_{\zkp}$ with
  $g \in \wm_{k - 35}(\gamt)_{\zkp}$ and
  $\ww_{\zkp, k - 35}(\widetilde{g}) = 0$.
  Assume $v_{p}(\wf) > b_{k} = 7$. Then we have
  $\wf = 0$.
\end{Lem}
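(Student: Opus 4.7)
The plan is a short contradiction argument built on Lemma~\ref{lem:6} together with the multiplicativity of the leading term $\ldt$ in the monomial order of Section~\ref{sec:3}. Suppose, for contradiction, that $\wf \ne 0$; then $\widetilde{g} \ne 0$ as well, and since $\widetilde{g} \in \ker(\ww_{\zkp, 48})$, Lemma~\ref{lem:6} yields that
\begin{equation*}
  \ldt(\widetilde{g}) = \alpha\, q_{1}^{a} q_{12}^{b} q_{2}^{c}
\end{equation*}
for some $\alpha \in \F_{p}^{\times}$ with $a \le 4$ and $c \le 4$.

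Next I would compute $\ldt(\wf)$. The lexicographic-type order on $S$ is multiplicative ($\mu < \mu'$ implies $\mu\nu < \mu'\nu$), and from this one checks that $\ldt$ is multiplicative on nonzero elements of $\F_{p}[q_{12}, q_{12}^{-1}]\prsrs{q_{1}, q_{2}}$: no cancellation can occur because the pair realising the minimum product exponent is unique. Combined with $\ldt(\wx_{35}) = q_{1}^{2} q_{12}^{-1} q_{2}^{3}$, this gives
\begin{equation*}
  \ldt(\wf) = \alpha\, q_{1}^{a+2} q_{12}^{b-1} q_{2}^{c+3},
\end{equation*}
so in particular $\widetilde{a}_{f}(a+2, b-1, c+3) = \alpha \ne 0$.

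But Lemma~\ref{lem:6} forces $a+2 \le 6$ and $c+3 \le 7$, so this nonzero Fourier coefficient sits at indices $(m,n)$ with $m, n \le 7 = b_{83}$. This contradicts the hypothesis $v_{p}(\wf) > 7$, which by definition makes every such coefficient vanish; hence $\wf = 0$. The substantive input is entirely Lemma~\ref{lem:6}, and the rest is a tight exponent count that succeeds precisely because $b_{83} = 7$ equals $3 + 4$, the sum of the $q_{2}$-exponent in $\ldt(\wx_{35})$ and the upper bound on $c$ provided by Lemma~\ref{lem:6}.
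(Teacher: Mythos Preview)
Your argument is correct and matches the paper's proof essentially line for line: assume $\wf\ne 0$, apply Lemma~\ref{lem:6} to bound the exponents of $\ldt(\widetilde{g})$, multiply by $\ldt(\wx_{35})=q_{1}^{2}q_{12}^{-1}q_{2}^{3}$ to read off $\ldt(\wf)$, and deduce $v_{p}(\wf)\le 7$, contradicting the hypothesis. The only addition you make is the explicit remark on multiplicativity of $\ldt$, which the paper uses tacitly.
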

\begin{proof}
  Assume $\wf \ne 0$.
  We put $\ldt(\widetilde{g}) = \alpha q_{1}^{a}q_{12}^{b}q_{2}^{c}$, where
  $\alpha \in \F_{p}^{\times}$.
  Then by Lemma \ref{lem:6}, we have $a, c \le 4$.
  Since $\ldt(\wx_{35}) = q_{1}^{2}q_{12}^{-1}q_{2}^{3}$, we have
  $\ldt(\wf) = \alpha q_{1}^{a + 2}q_{12}^{b-1}q_{2}^{c + 3}$.
  By $a + 2 \le 6$ and $c + 3 \le 7$, we have $v_{p}(\wf) \le 7$.
\end{proof}

Let $k$ be odd and $f \in M_{k}(\gamt)_{\zkp}$. Assume that
\begin{align}
  \label{k4}
  v_p(\wf) > b_{k}.
\end{align}
If $k < 45$, then the assertion follows from Lemma \ref{lem:4}.
Hence we suppose that $k \ge 45$. To apply an induction on $k$, suppose that the assertion is true for any weight strictly smaller than $k$.

We take $g\in M_{k-35}(\Gamma _2)_{\mathbb{Z}_{(p)}}$ such that $f=gX_{35}$.
By \eqref{k4}, \eqref{eq:4} and Lemma \ref{lem:12}, we have
$W(g) \equiv 0 \mod{p}$.
Now we separate into four cases:\\
(1) If $k \not \equiv 11 \mod{12}$, then there exists $g'\in M_{k-45}(\Gamma _2)_{\mathbb{Z}_{(p)}}$ such that
$g \equiv X_{10}g' \mod{p}$, by Proposition \ref{prop:1}. Then $f=X_{35}g=X_{35}X_{10}g'$. If we put $f':=X_{10}g'\in M_{k-10}(\Gamma _2)_{\mathbb{Z}_{(p)}}$, then
\[
  b_{k} < v_p(\wf)=v_{p}(\wx_{10}\wf ')=1+v_p(\wf ').
\]
This implies $v_p(\wf ')> b_{k - 10}$. By the induction hypothesis, we get $f'\equiv 0$ mod $p$.
Therefore we have $f\equiv 0 \mod{p}$.

\noindent
(2) If $k \equiv 11 \mod{12}$ and $k \equiv 1, 5, 7, 9 \mod{10}$, then we have
$b_{k}= b_{k - 8} + 1$. By Corollary \ref{cor:3}, there exists
$g' \in M_{k - 43}(\gamt)_{\zkp}$ such that $g \equiv X_{10}g' \mod{p}$.
Put $f' = X_{35}g' \in M_{k-8}(\gamt)_{\zkp}$.
Then we have $v_{p}(\wf') = v_{p}(\wf) - 1 > b_{k - 8}$. By the induction
hypothesis, we have $f' \equiv 0 \mod{p}$.
Therefore we have $f \equiv 0 \mod{p}$.

\noindent
(3) If $k \equiv 11 \mod{12}$, $k \equiv 3 \mod{10}$ and $k < 115$,
then we have $k = 83$ because $k \ge 45$.
Then by Lemma \ref{lem:7}, we have $f \equiv 0 \mod{p}$.

\noindent
(4) Finally, we assume $k \equiv 11 \mod{12}$ and $k \ge 115$.
To prove this case, we start with proving the following lemma.
\begin{Lem}
  \label{lem:9}
  Let $f = X_{35}g \in M_{k}(\gamt)_{\zkp}$ with
  $W(g) \equiv 0 \mod{p}$.
  Assume $k \equiv 11 \mod{12}$, $k \ge 115$ and \eqref{k4}.
  Then we have $W''(g) \equiv 0 \mod{p}$.
\end{Lem}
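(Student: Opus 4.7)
The plan is to reduce the target to a Witt vanishing on an auxiliary form of smaller weight, and then to close that via a Sturm argument enhanced by a cusp-form observation. Since $k \equiv 11 \pmod{12}$ implies $12 \mid k-35$ and $W(g) \equiv 0 \pmod p$ by hypothesis, Corollary~\ref{cor:3} applied to $g$ produces $g_{1} \in M_{k-43}(\gamt)_{\zkp}$ with $g \equiv X_{10} g_{1} \pmod p$. Combining the product formula \eqref{eq:8}, Lemma~\ref{lem:8} ($W''(X_{10}) = x_{12}$), and $W(X_{10}) \equiv 0 \pmod p$ (Lemma~\ref{lem:10}),
\[
 W''(g) \equiv W''(X_{10}) W(g_{1}) \equiv x_{12}\, W(g_{1}) \pmod p.
\]
Because $x_{12} = \Delta \otimes \Delta$ is not a zero divisor in the relevant formal power series ring, proving $W''(g) \equiv 0 \pmod p$ is equivalent to proving $W(g_{1}) \equiv 0 \pmod p$. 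By Lemma~\ref{lem:12} the latter follows from $v_{p}(\widetilde{g_{1}}) > [(k-43)/12]$, and by Lemma~\ref{lem:3} applied to $\widetilde{g} \equiv \wx_{10}\widetilde{g_{1}} \pmod p$, this is equivalent to $v_{p}(\widetilde{g}) > [(k-43)/12] + 1$.

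To lower-bound $v_{p}(\widetilde{g})$ from $v_{p}(\wf) > b_{k}$ I would exploit the Borcherds product $\wx_{35} = q_{1}^{2}q_{2}^{2}(q_{1}-q_{2}) v$, where $v$ is a symmetric unit in $\F_{p}(q_{12})\prsrs{q_{1}, q_{2}}$, together with the symmetry of $\widetilde{g}$ (coming from even weight $k-35$). Setting $G := v\widetilde{g}$, the vanishing of $\wf$ on the box $[0, b_{k}]^{2}$ translates into the anti-diagonal relations $G_{m-1,n} = G_{m,n-1}$ on $[0, b_{k}-2]^{2}$ together with the boundary vanishing $G_{0,j} = 0$ for $j \le b_{k}-3$. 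Propagating these relations along anti-diagonals and then inverting the unit $v$ forces $\widetilde{g}$ to vanish on the triangle $\{m+n \le b_{k}-3\}$, hence $\widetilde{g_{1}}$ on $\{m+n \le b_{k}-5\}$.

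The main obstacle is that this triangular vanishing yields only $v_{p}(\widetilde{g_{1}}) > \lfloor(b_{k}-5)/2\rfloor$, which is far short of $[(k-43)/12]$. To close the gap I plan to use the cusp-form information hidden in the triangle. Restricting to $m=0$ gives $a_{g_{1}}(0,0,n) \equiv 0 \pmod p$ for $n \le b_{k}-5$; equivalently, $\Phi(g_{1}) \in M_{k-43}(\gamo)$ has its first $b_{k}-4$ Fourier coefficients vanishing mod $p$. The hypothesis $k \ge 115$ with $k \equiv 11 \pmod{12}$ is calibrated so that $b_{k}-4 \ge [(k-43)/12] + 1$, so Sturm's theorem for elliptic modular forms forces $\Phi(g_{1}) \equiv 0 \pmod p$: the form $g_{1}$ is a cusp form modulo $p$. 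Nagaoka's Theorem~\ref{thm:4.7}, combined with $\Phi(\wx_{10}) = \Phi(\wx_{16}) = 0$ and $\Phi(\wy_{12}) = \widetilde{\Delta}$ (coming from $W(Y_{12}) = y_{12}$), together with the algebraic independence of $\widetilde{\Delta}$ over $\F_{p}$, then furnishes a decomposition $\widetilde{g_{1}} \equiv \wx_{10}\widetilde{A} + \wx_{16}\widetilde{B} \pmod p$. Using $\widetilde{x_{4}} \equiv 1 \pmod p$ and $W(X_{16}) = x_{4}x_{12}$, this collapses to $W(g_{1}) \equiv x_{12}\, W(B) \pmod p$, reducing the final step to $W(B) \equiv 0 \pmod p$ at weight $k-59$. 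I would close this by iterating the same cusp-form reduction (or by appealing to the induction hypothesis on $k$ in Theorem~\ref{thm:1} applied to $B$), terminating once the residual weight is small enough that the vanishing follows directly.
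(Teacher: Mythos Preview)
Your opening reduction coincides with the paper's: apply Corollary~\ref{cor:3} to get $g_{1}\in M_{k-43}(\gamt)_{\zkp}$ with $g\equiv X_{10}g_{1}\pmod p$, and observe that $W''(g)\equiv x_{12}W(g_{1})\pmod p$, so the target is $W(g_{1})\equiv 0\pmod p$. The divergence is in how you try to force this.

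The final ``iterate or invoke the induction hypothesis'' step does not close. The induction hypothesis of Theorem~\ref{thm:1} is useless for $B$: it requires $v_{p}(\widetilde B)>b_{k-59}$, and you have established no $v_{p}$-bound on $\widetilde B$ whatsoever (the decomposition $\widetilde{g_{1}}=\wx_{10}\widetilde A+\wx_{16}\widetilde B$ transmits neither box nor triangular vanishing to $\widetilde B$). As for iteration, each pass drops the weight by $16$ but costs $2$ from your triangular budget (dividing by $x_{12}=q_{1}q_{2}u$); to run the $\Phi$-Sturm at step $j$ you need $b_{k}-5-2j\ge[(k-43-16j)/12]$. Already for the smallest relevant $k=119$ this fails at $j=1$: $b_{119}-7=4$ while $[(119-59)/12]=5$. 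So the scheme stalls immediately. More structurally, triangular vanishing of size $C$ on $\sym^{2}(M_{l}(\gamo))$ only kills the space when roughly $C\ge 2\dim M_{l}-2\approx l/6$, whereas you have $C\approx k/10$ against $l=k-43$; the inequality goes the wrong way for $k\ge 115$.

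This is precisely the obstruction flagged in the Remark following Lemma~\ref{lem:3}: the factor $q_{1}-q_{2}$ in the Borcherds product of $X_{35}$ prevents you from controlling $v_{p}(\widetilde g)$ from $v_{p}(\wf)$. The paper sidesteps the whole issue by multiplying $f$ by a second copy of $X_{35}$. The square $X_{35}^{2}$ is of even weight and, by Lemma~\ref{lem:10}, satisfies $X_{35}^{2}\equiv X_{10}^{2}(Y_{12}^{2}X_{16}^{2}+X_{10}^{4})\pmod 2$ (and an analogous identity mod $3$). Hence
\[
 fX_{35}\equiv X_{10}^{3}\,g_{1}\bigl(Y_{12}^{2}X_{16}^{2}+X_{10}^{4}\bigr)\pmod p,
\]
and Lemma~\ref{lem:3} applied twice (once for $\wx_{35}$ on the left, three times for $\wx_{10}$ on the right) gives the clean \emph{box} bound
\[
 v_{p}\bigl(\widetilde{g_{1}}(\wy_{12}^{2}\wx_{16}^{2}+\wx_{10}^{4})\bigr)>b_{k}-1=\left[\tfrac{k-15}{10}\right].
\]
The hypothesis $k\ge 115$ is exactly what makes $[(k-15)/10]\ge[(k+5)/12]$, so Lemma~\ref{lem:12} applies to this weight-$(k+5)$ form and yields $W(g_{1})\cdot y_{12}^{2}x_{12}^{2}\equiv 0$, hence $W(g_{1})\equiv 0\pmod p$. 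No iteration is needed, and the problematic $(q_{1}-q_{2})$ never appears.
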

\begin{proof}
  We show the statement only for $p=2$. The case $p=3$ also can be proved by a
  similar argument.
  By Corollary \ref{cor:3},
  there exists $g'\in M_{k-43}(\Gamma _2)_{\mathbb{Z}_{(p)}}$ such that
  $g\equiv X_{10}g' \mod{p}$.
  Then, it follows from Lemma \ref{lem:10} that
  \[fX_{35}\equiv X_{10}X_{35}^2g'
    \equiv g'X_{10}^3(Y_{12}^2X_{16}^2+X_{10}^4) \mod{p}. \]
  By Lemma \ref{lem:3} and the assumption (\ref{k4}), we have
  \begin{align*}
    b_{k}+2 < v_p(\widetilde{f})+2\le
    v_p(\wf \wx_{35}) =v_{p}(\widetilde{g}' \wx_{10}\wx_{35}^2)=
    3+v_p(\widetilde{g}'(\wy_{12}^2\wx_{16}^2+\wx_{10}^4)).
  \end{align*}
  This implies that
  \begin{equation*}
    v_p(\widetilde{g}'(\wy_{12}^2\wx_{16}^2+\wx_{10}^4))> [(k-15)/10].
  \end{equation*}
  On the other hand, we have
  \begin{align*}
    W(g'(Y_{12}^2X_{16}^2+X_{10}^4))=W(g'Y_{12}^2X_{16}^2)
    \equiv W(g') \cdot y_{12}^2 \cdot x_{12}^2 \mod{p},
  \end{align*}
  where we used (\ref{eq:6}) and the fact $x_{4}\equiv 1$ mod $p$. By this
  congruence, $W(\widetilde{g}'(\wy_{12}^2\wx_{16}^2+\wx_{10}^4))$ can be
  regarded as of weight $k-43+48=k+5$. By $k\ge 115$, we have
  \[v_p(\widetilde{g}'(\wy_{12}^2\wx_{16}^2+\wx_{10}^4))> [(k-15)/10] \ge [(k+5)/12]. \]
  Applying Lemma \ref{lem:12}, we have
  \[W(g'(Y_{12}^2X_{16}^2+X_{10}^4))\equiv W(g') \cdot y_{12}^2 \cdot x_{12}^2\equiv 0 \mod{p}. \]
  This implies that
  \[W''(g)\equiv W(g')\cdot x_{12} \equiv 0 \mod{p}. \]
This completes the proof of the lemma.
\end{proof}
We shall return to proof of the case (4).
Since $W(g)\equiv 0 \mod{p}$ and $W''(g) \equiv 0 \mod{p}$,
there exists $h\in M_{k-55}(\Gamma _2)_{\mathbb{Z}_{(p)}}$ such that
$g\equiv X_{10}^2h \mod{p}$ by Proposition \ref{prop:1}.
Note that $f\equiv X_{10}^2X_{35}h \mod{p}$. If we put $f':=X_{35}h\in M_{k-20}(\Gamma _2)_{\mathbb{Z}_{(p)}}$, then
\[v_{p}(\wf)=v_p(\wx_{10}^2\wf')=2+v_p (\wf')> b_{k}. \]
This means that \[v_p(\wf')> b_{k-20}. \]
By the induction hypothesis, we get $f'\equiv 0 \mod{p}$.
Therefore we have $f \equiv 0 \mod{p}$.
This completes the proof.
\qed

\subsection{Proof of Corollary \ref{cor:1}}
As explained in the beginning of this section, we may assume
$\mathcal{O}_{\mathfrak{p}} = \zkp$, where $p$ is a prime number.
Let $\Gamma \subset \gamt$ be a congruence subgroup of level $N$
and $f \in M_{k}(\Gamma)_{\zkp}$.
By the proof of \cite[Theorem 1.3]{choi2013sturm}, there exists
$g \in M_{k (i - 1)}(\Gamma)_{\zkp}$ such that
\begin{equation*}
  f g \in M_{ki}(\gamt)_{\zkp}, \text{ and } g \not \equiv 0 \mod{p}.
\end{equation*}
Here $i = [\gamt: \Gamma]$.
We assume $v_{p}(\wf) > b_{ki}$.
Then by \eqref{eq:11}, we have
\begin{equation*}
  v_{p}(\wf \wg) \ge v_{p}(\wf) > b_{ki}.
\end{equation*}
By Theorem \ref{thm:1}, we have $\wf \wg = 0$.
Since $\wg \ne 0$, we have $\wf = 0$, i.e., $f \equiv 0 \mod{p}$.
This completes the proof.
\qed

\subsection{Proof of the sharpness}
We prove Theorem \ref{thm:2}.
If $k$ is even, then we can show the assertion of the theorem
by a similar argument to \cite{choi2013sturm}.
Let $k$ be odd and $f_{k}$ for $k = 35, 39, 41, 43$ and $47$
be modular forms given in Lemma \ref{lem:4}.
Then by Lemma \ref{lem:4}, we have
$\ldt(f_{k}X_{10}^{i}) = q_{1}^{2 + i}q_{2}^{-1 - i}q_{3}^{3 + i}$
for $k = 35, 39, 41, 43$ and
$\ldt(f_{47}X_{10}^{i})=q_{1}^{3 + i}q_{2}^{-2 - i}q_{3}^{4 + i}$.
Thus we have the assertion of the theorem.\qed

\section*{Acknowledgment}
The first named author is supported by JSPS Grant-in-Aid for Young Scientists (B) 26800026.
The second named author is partially supported by JSPS Kakenhi 23224001.

\providecommand{\bysame}{\leavevmode\hbox to3em{\hrulefill}\thinspace}
\providecommand{\MR}{\relax\ifhmode\unskip\space\fi MR }
\providecommand{\MRhref}[2]{%
  \href{http://www.ams.org/mathscinet-getitem?mr=#1}{#2}
}
\providecommand{\href}[2]{#2}

\begin{flushleft}
  Toshiyuki Kikuta\\
  Faculty of Information Engineering\\
  Department of Information and Systems Engineering\\
  Fukuoka Institute of Technology\\
  3-30-1 Wajiro-higashi, Higashi-ku, Fukuoka 811-0295, Japan\\
  E-mail: kikuta@fit.ac.jp
\end{flushleft}

\begin{flushleft}
  Sho Takemori\\
  Department of Mathematics, Hokkaido University\\
  Kita 10, Nishi 8, Kita-Ku, Sapporo, 060-0810, Japan \\
  E-mail: takemori@math.sci.hokudai.ac.jp
\end{flushleft}

\end{document}